\documentclass[11pt,reqno]{amsart}
\usepackage{geometry}
\usepackage{amsmath,amssymb,amsthm}
\usepackage{amscd}
\usepackage[colorlinks=true,citecolor=blue,linkcolor=blue,urlcolor=blue]{hyperref}
\hypersetup{
  pdftitle={A characterization of triangulated structures induced by premutation triples},
  pdfauthor={Panyue Zhou},
  pdfsubject={Extriangulated categories and mutation},
  pdfkeywords={extriangulated category, premutation triple, pretriangulated category, triangulated quotient, mutation}
}

\numberwithin{equation}{section}

\newtheorem{theorem}{Theorem}[section]
\newtheorem{proposition}[theorem]{Proposition}
\newtheorem{lemma}[theorem]{Lemma}
\newtheorem{corollary}[theorem]{Corollary}
\theoremstyle{definition}
\newtheorem{definition}[theorem]{Definition}

\newtheorem{axiom}[theorem]{Axiom}
\newtheorem{remark}[theorem]{Remark}
\newtheorem{example}[theorem]{Example}

\newcommand{\C}{\mathcal C}
\newcommand{\Ssub}{\mathcal S}
\newcommand{\Zsub}{\mathcal Z}
\newcommand{\Vsub}{\mathcal V}
\newcommand{\Isub}{\mathcal I}
\newcommand{\Uti}{\widetilde{\mathcal U}}
\newcommand{\Tti}{\widetilde{\mathcal T}}
\newcommand{\E}{\mathbb E}
\newcommand{\s}{\mathfrak s}
\DeclareMathOperator{\add}{\mathsf{add}\hspace{-0.8pt}}
\DeclareMathOperator{\moduleCategory}{\mathsf{mod}}

\renewcommand{\mod}{\moduleCategory}
\DeclareMathOperator{\Cone}{Cone\hspace{0.5pt}}
\DeclareMathOperator{\CoCone}{CoCone\hspace{0.5pt}}
\newcommand{\Id}{\mathrm{Id}}
\DeclareMathOperator{\Hom}{Hom}
\newcommand{\underlineZ}{\underline{\mathcal Z}}
\newcommand{\ul}[1]{\underline{#1}}
\newcommand{\xdash}[1]{\overset{#1}{\dashrightarrow}}

\title[Triangulated structures induced by premutation triples]{A characterization of triangulated structures induced\\[2mm] by premutation triples}

\author{Panyue Zhou}
\address{School of Mathematics and Statistics, Changsha University of Science and Technology, 410114 Changsha, Hunan, P. R. China}
\email{panyuezhou@163.com}

\makeatletter
\@namedef{subjclassname@2020}{\textup{2020} Mathematics Subject Classification}
\makeatother
\subjclass[2020]{16E30; 16G10; 16D90}
\keywords{extriangulated category; premutation triple; pretriangulated category; triangulated quotient; mutation}

\begin{document}

\begin{abstract}
Let $(\mathcal{S},\mathcal{Z},\mathcal{V})$ be a premutation triple in an extriangulated category. Set $\mathcal{I}=\mathcal{S}\cap\mathcal{Z}=\mathcal{Z}\cap\mathcal{V}$.
Iitsuka introduced premutation triples as a unified framework for various mutation constructions and showed that the quotient category $\mathcal{Z}/[\mathcal{I}]$ admits a canonical pretriangulated structure. In this paper, we prove that the compatibility condition $(\mathrm{MT4})$ is not only sufficient but also necessary for this structure to be triangulated. Therefore, $(\mathrm{MT4})$ precisely characterizes when Iitsuka's canonical pretriangulated structure becomes triangulated.
Furthermore, we prove that, for concentric twin cotorsion pairs, the compatibility condition used by Nakaoka in the construction of triangulated quotients is equivalent to $(\mathrm{MT4})$. Finally, we construct an explicit reducible triple which is not a mutation triple. This gives a negative answer to a question posed by Iitsuka.
\end{abstract}

\maketitle

\section{Introduction}

Mutation is a fundamental method in representation theory for constructing new objects and quotient categories. Iyama and Yoshino \cite{IyamaYoshino} developed mutation with respect to rigid subcategories, including cluster tilting mutation. Coelho  Sim\~oes  and Pauksztello \cite{SimoesPauksztello} studied mutation of orthogonal collections in connection with simple-minded systems. Zhou and Zhu \cite{ZhouZhu}, and Zheng, Cao and Wei \cite{ZhengCaoWei}, obtained related quotient constructions from functorially finite subcategories. Nakaoka and Palu \cite{NakaokaPalu} introduced extriangulated categories as a common framework for exact and triangulated categories.

Iitsuka \cite{Iitsuka} introduced premutation triples as a common generalization of Frobenius extriangulated categories, rigid mutation pairs, orthogonal mutation pairs and constructions arising from concentric twin cotorsion pairs. Let
$$
  (\Ssub,\Zsub,\Vsub),\hspace{2mm}
  \Isub=\Ssub\cap\Zsub=\Zsub\cap\Vsub,
$$
be a premutation triple. Iitsuka \cite[Definition~3.14]{Iitsuka} constructed mutation functors
$$
  \Sigma,\Omega\colon \Zsub/[\Isub]\longrightarrow\Zsub/[\Isub]
$$
and proved that they endow the quotient with a canonical pretriangulated structure
$$
  (\Zsub/[\Isub],\Sigma,\Omega,\nabla,\Delta),
$$
see \cite[Theorem~3.37]{Iitsuka}. This construction provides a unified categorical framework for the mutation settings mentioned above. Iitsuka further introduced the compatibility condition $(\mathrm{MT4})'$ in \cite[Condition~4.6]{Iitsuka}. By \cite[Remark~4.9(1)]{Iitsuka}, this condition is equivalent to $(\mathrm{MT4})$. By \cite[Theorem~4.8]{Iitsuka}, these equivalent conditions are sufficient for the canonical pretriangulated category to be triangulated.

It is natural to ask whether the converse holds. Notice that $(\mathrm{MT4})$ is defined by relative extensions in the ambient extriangulated category, while triangulatedness is a property of the quotient category. In our main result, we prove that the converse is true.

\begin{theorem}\label{thm:intro-main}
Let $(\Ssub,\Zsub,\Vsub)$ be a premutation triple in an extriangulated category $(\C,\E,\s)$, and put
$$
  \Isub=\Ssub\cap\Zsub=\Zsub\cap\Vsub.
$$
Then the following statements are equivalent.
\begin{enumerate}
\item The canonical pretriangulated category
$$
  (\Zsub/[\Isub],\Sigma,\Omega,\nabla,\Delta)
$$
constructed in \cite[Theorem~3.37]{Iitsuka} is a triangulated category.
\vspace{2mm}

\item The premutation triple satisfies Iitsuka's condition $(\mathrm{MT4})'$ of \cite[Condition~4.6]{Iitsuka}.
\vspace{2mm}

\item The premutation triple satisfies $(\mathrm{MT4})$ of \cite[Remark~4.9(1)]{Iitsuka}.
\end{enumerate}
\vspace{2mm}

If $\ul{\mathcal X}$ denotes the image of a subcategory $\mathcal X$ in $\C/[\Isub]$, then the equivalent condition in {\rm (3)} is
$$
 (\Zsub/[\Isub],\Sigma,\Omega,\nabla,\Delta)
 \text{ is triangulated}
 \quad\Longleftrightarrow\quad
 \ul{\Zsub\langle1\rangle}^{-}=\ul{\Zsub\langle-1\rangle}^{+}.
$$
We recall the relative shifts $\langle1\rangle$, $\langle-1\rangle$ and the functors $(\,\cdot\,)^{-}$, $(\,\cdot\,)^{+}$ in {\rm Section~\ref{sec:prelim}}.
\end{theorem}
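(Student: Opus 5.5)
The implications $(2)\Leftrightarrow(3)$ and $(3)\Rightarrow(1)$ are already available: the first is \cite[Remark~4.9(1)]{Iitsuka} and the second is \cite[Theorem~4.8]{Iitsuka}. So the work is to prove $(1)\Rightarrow(3)$. I will show the equality $\ul{\Zsub\langle1\rangle}^{-}=\ul{\Zsub\langle-1\rangle}^{+}$ as two inclusions, using only the rotation axiom and the octahedral axiom in the quotient $\Zsub/[\Isub]$, and I will work throughout at the level of objects of $\C/[\Isub]$.

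First I recall how $\Sigma$ and $\Omega$ are built. For $Z\in\Zsub$ one takes an $\s$-conflation $Z\to I\to \Sigma Z$ with $I\in\Isub$ (or, more generally, with middle term in $\Ssub$, which is then corrected to lie in $\Zsub$). I will unpack membership in $\Zsub\langle1\rangle$ and $\Zsub\langle-1\rangle$ in the same terms: an object of $\Zsub\langle1\rangle$ is a cone of a conflation $Z\to S\to X$ with $S\in\Ssub$, and $(-)^{-}$ replaces such an $X$ by a $\Zsub$-approximation coming from a conflation with $\Vsub$-term. The point of this step is to rephrase the failure of $(\mathrm{MT4})$ as a concrete statement about standard triangles. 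Concretely, I expect that an object of $\ul{\Zsub\langle1\rangle}^{-}$ is, up to $\Isub$, the third term of a composite of two standard $\nabla$-triangles whose cone is not realized as a standard triangle unless it also lies in $\ul{\Zsub\langle-1\rangle}^{+}$.

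Second, given $X\in\ul{\Zsub\langle1\rangle}^{-}$, I write it through two conflations $Z_1\to S\to X'$ and $W\to X'\to X$ (the latter with $W$ in $\Vsub$-type position). Lifting these to $\nabla$-triangles in $\Zsub/[\Isub]$ gives a composable pair of morphisms $f,g$. I then apply (TR4) to $g\circ f$. The octahedron supplies a distinguished triangle containing the cone of $g\circ f$. By \cite[Theorem~3.37]{Iitsuka}, this triangle is isomorphic to a standard $\Delta$-triangle, and realizing it via (ET4)/(ET4)$^{\mathrm{op}}$ exhibits $X$ as a member of $\ul{\Zsub\langle-1\rangle}^{+}$. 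Because $\Sigma$ and $\Omega$ are quasi-inverse and $\nabla$-triangles and $\Delta$-triangles correspond, the reverse inclusion follows by the dual argument.

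The main obstacle is the second step, namely matching the abstract octahedron, whose third object is only determined up to non-unique isomorphism in the quotient, with a concrete $\s$-conflation in $\C$. To handle it, I will use that isomorphisms in $\Zsub/[\Isub]$ lift to morphisms in $\C$ that become isomorphisms after adding $\Isub$-summands, i.e.\ $X\oplus I\cong X'\oplus I'$. I will also need that $\ul{\Zsub\langle\pm1\rangle}^{\mp}$ is closed under such stable isomorphisms and under direct summands in $\C/[\Isub]$. If closure under summands fails, I would instead compare the two classes through the realized triangles directly, checking membership via the defining conflations rather than isomorphism classes.
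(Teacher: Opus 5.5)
Your reduction to $(1)\Rightarrow(3)$ via Iitsuka's results is fine, but the argument you sketch for it has a genuine gap, and the unpacking it rests on is inaccurate. The objects of $\Zsub\langle1\rangle$ are cones of the fixed $\Isub$-approximations $Z\to I^Z\to Z\langle1\rangle$, so the middle term lies in $\Isub$, not $\Ssub$. Next, $\Sigma Z=\sigma(Z\langle1\rangle)$ comes from the reflection triangle $Z\langle1\rangle\to\sigma(Z\langle1\rangle)\to S$. For $U\in\Uti$, the object $U^{-}$ is the middle term of $\sigma U\langle-1\rangle\to U^{-}\to U$ with the specific extension $\lambda_{\sigma U}h^U$. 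Dually, $T^{+}$ is fixed by $h_T\lambda^{\omega T}$. Neither is a $\Zsub$-approximation with a $\Vsub$-term.

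None of $U$, $T$, $U^{-}$, $T^{+}$ lies in $\Zsub$. So your conflations $Z_1\to S\to X'$ and $W\to X'\to X$ do not give morphisms of $\Zsub/[\Isub]$. Also, (TR4) in $\Zsub/[\Isub]$ only produces objects of $\Zsub$ up to stable isomorphism. Whether a given object is $Z\langle-1\rangle^{+}$ is decided by an extension class in $\E(U,T)$, and an isomorphism of triangles in the quotient does not by itself identify such a class. The sentence ``realizing it via (ET4)/(ET4)$^{\mathrm{op}}$ exhibits $X$ as a member of $\ul{\Zsub\langle-1\rangle}^{+}$'' therefore carries the entire implication, and nothing in the proposal supports it.

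What is missing is that triangulatedness is needed only through the adjunction $\Sigma\dashv\Omega$. Since $\Sigma$ is an autoequivalence, the unit $\alpha$ and counit $\beta$ are isomorphisms. Iitsuka's explicit description of the adjunction gives the identity $\lambda_Z\widetilde\beta_Zh^{U}=-h_{Z\langle-1\rangle}\lambda^{\Omega Z}$ for $U=(\Omega Z)\langle1\rangle$ (Lemma~\ref{lem:roundtrip}).

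The paper proceeds as follows.
\begin{enumerate}
\item It lifts $\beta_Z$ to an honest isomorphism $\Sigma\Omega Z\oplus I_0\cong Z\oplus I_1$ (Lemma~\ref{lem:stable-lift}).
\item It transports the reflection triangle of $U$ to $U\to Z\oplus I_1\to S'$ with $S'\in\Ssub$.
\item It applies (ET4)$^{\mathrm{op}}$ in $(\C,\E^{\Isub},\s^{\Isub})$ to this triangle and the syzygy triangle of $Z$.
\item One output is $T\to E_Z\to U$ with $T=Z\langle-1\rangle$ and extension $\lambda_Z\widetilde\beta_Zh^U=-h_T\lambda^{\Omega Z}$, so $E_Z\cong Z\langle-1\rangle^{+}$.
\item The other output, $E_Z\to I'\to S'$, shows $E_Z\in\CoCone_{\E^{\Isub}}(\Isub,\Ssub)=\Uti^{-}$.
\end{enumerate}
Dually, invertibility of $\alpha$ gives $Z\langle1\rangle^{-}\in\Tti^{+}$.

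So the right target is the pair of inclusions $(\mathrm{MT4})'$, whose right-hand sides are exactly what an octahedral diagram in $\C$ can certify. The equality $(\mathrm{MT4})$ then follows from Iitsuka's Remark~4.9(1). Finally, no summand-closure of the classes $\ul{\Zsub\langle\pm1\rangle}^{\mp}$ is needed, only functoriality of $(\,\cdot\,)^{\pm}$ on stable isomorphism classes.
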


Iitsuka proves $(2)\Rightarrow(1)$ in \cite[Theorem~4.8]{Iitsuka} and $(2)\Leftrightarrow(3)$ in \cite[Remark~4.9(1)]{Iitsuka}. We therefore only need to prove $(1)\Rightarrow(2)$.
\vspace{2mm}

By \cite[Lemma~3.16 and Notation~3.17]{Iitsuka}, $\Sigma$ is left adjoint to $\Omega$, with unit $\alpha$ and counit $\beta$. If the canonical pretriangulated category is triangulated, then $\Sigma$ is an equivalence. Hence $\alpha$ and $\beta$ are isomorphisms. We use their explicit descriptions in \cite[Remark~3.18]{Iitsuka} to prove the two inclusions in $(\mathrm{MT4})'$.
\vspace{1mm}

We give two applications. First, let $((\Ssub,\mathcal T),(\mathcal U,\Vsub))$ be a concentric twin cotorsion pair. Iitsuka \cite{Iitsuka} obtained two inclusions between quotient categories from Nakaoka's construction. He showed that these two inclusions are equivalent to $(\mathrm{MT4}^{-})$ for the associated premutation triple. We prove that $(\mathrm{MT4}^{-})$ is equivalent to $(\mathrm{MT4})$ in this case. Hence the two inclusions, $(\mathrm{MT4}^{-})$ and $(\mathrm{MT4})$ are all equivalent.
Second, Iitsuka \cite{Iitsuka} asked whether every reducible triple is a mutation triple. We show that the answer is negative. We first give an explicit example in the module category of a path algebra of type $A_4$. We then give another example over a finite-dimensional bound quiver algebra. In this example, the associated quotient is nonzero.

Throughout this paper, $k$ is a field and $\C$ is a skeletally small additive category. All subcategories are additive, full and closed under isomorphisms. We do not assume that they are closed under direct summands unless this is stated. For a subcategory $\mathcal X$, we denote by $\add\mathcal X$ the smallest subcategory containing $\mathcal X$ and closed under direct summands. For an object $X$, we write $\add X$ instead of $\add\{X\}$. Iitsuka uses these conventions in the opening paragraph of \cite[Section~2]{Iitsuka}.
\vspace{1mm}

Unless explicitly stated otherwise, we use Iitsuka's terminology and notation for relative extriangulated structures, relative shifts, premutation triples, mutation triples, reducible triples, mutation functors, the functors $(\,\cdot\,)^{-}$ and $(\,\cdot\,)^{+}$, and the canonical right, left and pretriangulated structures \cite[Sections~2--5 and Appendix~A]{Iitsuka}. We give precise references when each construction first occurs below.

We organize the paper as follows. In Section~\ref{sec:prelim}, we recall the necessary facts on premutation triples. In Section~\ref{sec:converse}, we prove two converse lemmas and Theorem~\ref{thm:intro-main}. In Section~\ref{sec:applications}, we apply our characterization to concentric twin cotorsion pairs and reducible triples. In Section~\ref{sec:relations}, we compare our result with earlier criteria for quotient categories.

\section{Preliminaries}\label{sec:prelim}

We retain the standing conventions stated in the introduction. In particular, $k$ is a field, $\C$ is skeletally small, and subcategories are additive, full and closed under isomorphisms without an implicit direct summand assumption. From now on $(\C,\E,\s)$ is an extriangulated category. We use the standard $\E$-triangle notation of Nakaoka and Palu \cite[Definition~2.19]{NakaokaPalu}. Thus, if $\delta\in\E(C,A)$ is realized by $A\to B\to C$, we write
$$
  A\longrightarrow B\longrightarrow C\xdash{\delta}.
$$
The extension label is placed over the middle of the dashed arrow, as in \cite[Definition~2.19]{NakaokaPalu}.

For subcategories $\mathcal X,\mathcal Y\subseteq\C$, we use Iitsuka's cone and cocone conventions \cite[Definition~2.8(3)]{Iitsuka}. Explicitly,
$$
  \Cone_{\E}(\mathcal X,\mathcal Y)
  =\{Z\in\C\mid{}\text{there is an $\s$-triangle }X\longrightarrow Y\longrightarrow Z\dashrightarrow,
                   X\in\mathcal X,\ Y\in\mathcal Y\},
$$
and
$$
  \CoCone_{\E}(\mathcal X,\mathcal Y)
  =\{Z\in\C\mid{}\text{there is an $\s$-triangle }Z\longrightarrow X\longrightarrow Y\dashrightarrow,
                   X\in\mathcal X,\ Y\in\mathcal Y\}.
$$
When the extension bifunctor is clear, we omit the subscript. Following Nakaoka and Palu \cite[Definition~2.17]{NakaokaPalu}, we call a subcategory $\mathcal D$ \emph{extension closed} if the middle term of every $\s$-triangle with first and third terms in $\mathcal D$ also belongs to $\mathcal D$.

Following Beligiannis--Reiten \cite[II.1]{BeligiannisReiten}, as recalled in \cite[Definition~2.37]{Iitsuka}, a pretriangulated category has a right triangulated structure and a left triangulated structure whose shift functors form an adjoint pair and satisfy the compatibility axioms. In this paper, the statement that Iitsuka's canonical pretriangulated category is triangulated always refers to this fixed structure. In particular, its suspension functor $\Sigma$ is an equivalence and its right adjoint $\Omega$ is a quasi-inverse.

\subsection{Relative extriangulated structures and approximations}

\begin{definition}\label{def:relative-structures}
Let $\Isub$ be a subcategory of $\C$. Following Herschend, Liu and Nakaoka \cite[Definition~3.18 and Proposition~3.19]{HerschendLiuNakaoka}, as recalled by Iitsuka \cite[Example~2.17]{Iitsuka}, we define closed subfunctors
\begin{align*}
\E_{\Isub}(C,A)
  &=\{\delta\in\E(C,A)\mid
      \delta\circ -:\C(\Isub,C)\to\E(\Isub,A)\text{ is zero}\},\\
\E^{\Isub}(C,A)
  &=\{\delta\in\E(C,A)\mid
      -\circ\delta:\C(A,\Isub)\to\E(C,\Isub)\text{ is zero}\}.
\end{align*}
For $\delta\in\E(C,A)$, $a:A\to A'$ and $c:C'\to C$, we write
$$
  a\delta=a_{*}\delta\in\E(C,A'),
  \qquad
  \delta c=c^{*}\delta\in\E(C',A).
$$
Thus a product such as $\lambda_Zfh^U$ means that one first pulls back $\lambda_Z$ along $f$ and then along $h^U$. This is the convention of \cite[Remark~2.2(4)--(5)]{Iitsuka}.
These subfunctors determine relative extriangulated structures
$$
  (\C,\E_{\Isub},\s_{\Isub})
 \quad\text{and}\quad
  (\C,\E^{\Isub},\s^{\Isub}).
$$
Following Iitsuka \cite[Example~2.17(3)]{Iitsuka}, we also write
$$
  \E^{\Isub}_{\Isub}=\E^{\Isub}\cap\E_{\Isub}
$$
and denote the induced intersection relative structure by
$$
  (\C,\E^{\Isub}_{\Isub},\s^{\Isub}_{\Isub}).
$$
Here a closed subfunctor means an additive subfunctor for which the restricted realization is again an extriangulated structure, see Herschend, Liu and Nakaoka \cite[Proposition~3.16]{HerschendLiuNakaoka} and Iitsuka \cite[Proposition~2.16]{Iitsuka}. In particular, all three relative structures satisfy $(\mathrm{ET4})$ and $(\mathrm{ET4})^{\mathrm{op}}$.
\end{definition}

We use the diagrammatic forms of the octahedral axioms given by Nakaoka and Palu \cite[Definition~2.12 and Remark~2.22]{NakaokaPalu}. We record them separately to fix the orientation of the diagrams used later.

\begin{axiom}[The octahedral axiom $(\mathrm{ET4})$]\label{axiom:et4}
Suppose that
$$
 A\xrightarrow{f}B\xrightarrow{f'}D\xdash{\delta}
 \quad\text{and}\quad
 B\xrightarrow{g}C\xrightarrow{g'}F\xdash{\delta'}
$$
are $\s$-triangles. We require an object $E$ and a commutative diagram
$$
\begin{CD}
A @>{f}>> B @>{f'}>> D\\
@V{1_A}VV @V{g}VV @V{d}VV\\
A @>{h}>> C @>{h'}>> E\\
@. @V{g'}VV @V{e}VV\\
@. F @= F
\end{CD}
$$
such that
$$
 A\xrightarrow{h}C\xrightarrow{h'}E\xdash{\delta''}
 \quad\text{and}\quad
 D\xrightarrow{d}E\xrightarrow{e}F\xdash{f'_{*}\delta'}
$$
are $\s$-triangles. The extension identities are
$$
 d^{*}\delta''=\delta,
 \qquad
 f_{*}\delta''=e^{*}\delta'.
$$
\end{axiom}

\begin{axiom}[The dual octahedral axiom $(\mathrm{ET4})^{\mathrm{op}}$]\label{axiom:et4op}
If
$$
 D\xrightarrow{f'}A\xrightarrow{f}B\xdash{\delta}
 \quad\text{and}\quad
 F\xrightarrow{g'}B\xrightarrow{g}C\xdash{\delta'}
$$
are $\s$-triangles, then we require an object $E$ and a commutative diagram
$$
\begin{CD}
D @>{d}>> E @>{e}>> F\\
@V{1_D}VV @V{h'}VV @V{g'}VV\\
D @>{f'}>> A @>{f}>> B\\
@. @V{h}VV @V{g}VV\\
@. C @= C
\end{CD}
$$
for which
$$
 E\xrightarrow{h'}A\xrightarrow{h}C\xdash{\delta''}
 \quad\text{and}\quad
 D\xrightarrow{d}E\xrightarrow{e}F\xdash{(g')^{*}\delta}
$$
are $\s$-triangles. The corresponding identities are
$$
 \delta'=e_{*}\delta'',
 \qquad
 d_{*}\delta=g^{*}\delta''.
$$
\end{axiom}

\begin{definition}\label{def:approximations}
A morphism $I_X\to X$ with $I_X\in\Isub$ is a right $\Isub$-approximation if every map from an object of $\Isub$ to $X$ factors through it. Left approximations are defined dually. A morphism $f:X\to Y$ is $\Isub$-monic if every morphism $X\to I$ with $I\in\Isub$ factors through $f$, and $g:Y\to Z$ is $\Isub$-epic if every morphism $I\to Z$ with $I\in\Isub$ factors through $g$. Iitsuka \cite[Lemma~2.20]{Iitsuka} proves that, for an $\s$-triangle $X\xrightarrow{f}Y\xrightarrow{g}Z\xdash{\delta}$, one has $\delta\in\E^{\Isub}(Z,X)$ exactly when $f$ is $\Isub$-monic, and $\delta\in\E_{\Isub}(Z,X)$ exactly when $g$ is $\Isub$-epic. Following Iitsuka \cite[Definition~2.21]{Iitsuka}, we assume that a subcategory used in the notion of strong functorial finiteness is closed under direct summands. We say that $\Isub$ is \emph{strongly contravariantly finite} in a subcategory $\mathcal X$ if every $X\in\mathcal X$ admits a right $\Isub$-approximation which is a deflation. We define strong covariant finiteness dually, and \emph{strong functorial finiteness} means both.
\end{definition}

If $\Isub$ is strongly functorially finite in $\Zsub$, then for every $Z\in\Zsub$ one fixes relative triangles
\begin{equation}\label{eq:cosyzygy}
  Z\xrightarrow{i^Z} I^Z\longrightarrow Z\langle1\rangle
  \xdash{\lambda^Z},
  ~~\mbox{where}~I^Z\in\Isub,
\end{equation}
and
\begin{equation*}
  Z\langle-1\rangle\longrightarrow I_Z\xrightarrow{p_Z}Z
  \xdash{\lambda_Z},
  ~~\mbox{where}~ I_Z\in\Isub.
\end{equation*}
Under the premutation hypotheses recalled below, these approximation triangles belong to the intersection relative structure whenever both relative conditions are needed. This follows from \cite[Lemma~2.24, Lemma~2.32 and Remark~3.2(3)--(4)]{Iitsuka} and the dual statements. They induce additive functors $\langle1\rangle$ and $\langle-1\rangle$ after passing to the quotient by $[\Isub]$.
Following \cite[Notation~2.26]{Iitsuka}, for a subcategory $\mathcal X$ we write $\mathcal X\langle1\rangle$ and $\mathcal X\langle-1\rangle$ for the corresponding cone and cocone subcategories. Equivalently, these are represented in the quotient by the objects $X\langle1\rangle$ and $X\langle-1\rangle$ with $X\in\mathcal X$.

\begin{definition}\label{def:ideal-quotient}
For a subcategory $\mathcal X\subseteq\C$, we write $[\mathcal X](A,B)$ for the subgroup of morphisms $A\to B$ factoring through an object of $\mathcal X$. We denote the additive quotient by $\C/[\mathcal X]$. If $\mathcal X$ is a subcategory of $\C$, we write $\ul{\mathcal X}$ for its full image in $\C/[\Isub]$. Thus $\ul{\Zsub}=\Zsub/[\Isub]$. We use underlines whenever a statement concerns objects, subcategories or functors in the quotient. This convention is important for Conditions $(\mathrm{MT4})'$ and $(\mathrm{MT4})$ below.
\end{definition}

We shall use the following stable lifting statement exactly in the form proved by Iitsuka.

\begin{lemma}\cite[Corollary~2.31]{Iitsuka}\label{lem:stable-lift}
Assume that $\Isub\subseteq\Zsub$ is strongly functorially finite. If a morphism $f:X\to Y$ in $\Zsub$ becomes an isomorphism in $\C/[\Isub]$, then there exist $I,J\in\Isub$ and an isomorphism in $\C$
$$
  \begin{pmatrix} f & j\\ i & k\end{pmatrix}:
  X\oplus J\xrightarrow{\sim}Y\oplus I.
$$
\end{lemma}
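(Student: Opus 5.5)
The statement immediately above is Lemma~\ref{lem:stable-lift}, which the paper quotes verbatim from Iitsuka. I would nevertheless prove it directly by lifting a quasi-inverse of $f$ to $\C$. The key point is that the idempotents $1-gf$ and $1-fg$ factor through $\Isub$, and these factorizations can be routed through the fixed approximation morphisms $i^X\colon X\to I^X$ and $p_Y\colon I_Y\to Y$.

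\emph{Lifting the inverse.} Since $f$ is invertible in $\C/[\Isub]$, choose $g\colon Y\to X$ in $\Zsub$ with $1_X-gf\in[\Isub](X,X)$ and $1_Y-fg\in[\Isub](Y,Y)$. Strong covariant finiteness of $\Isub$ in $\Zsub$ gives the left $\Isub$-approximation $i^X\colon X\to I^X$, which is an inflation. Any morphism $X\to I'\to X$ with $I'\in\Isub$ factors through $i^X$, so
$$1_X-gf=s\,i^X\quad\text{for some } s\colon I^X\to X.$$
Dually, using the right $\Isub$-approximation $p_Y\colon I_Y\to Y$, which is a deflation, we get
$$1_Y-fg=p_Y\,t\quad\text{for some } t\colon Y\to I_Y.$$

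\emph{The split inclusions.} Take $J=I_Y$ and $I=I^X$ as candidates, possibly enlarged later. The morphism
$$\begin{pmatrix} f\\ i^X\end{pmatrix}\colon X\to Y\oplus I^X$$
has left inverse $(g\;\; s)$, because $gf+s\,i^X=1_X$. Dually,
$$(f\;\; p_Y)\colon X\oplus I_Y\to Y$$
has right inverse $\begin{pmatrix} g\\ t\end{pmatrix}$, because $fg+p_Yt=1_Y$. So $X$ is a direct summand of $Y\oplus I^X$ and $Y$ is a direct summand of $X\oplus I_Y$. One cannot conclude directly from this, since $\C$ need not be Krull--Schmidt and $\Isub$-summands cannot simply be cancelled.

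\emph{Building the square matrix.} I expect this to be the main obstacle. First try the matrix
$$\begin{pmatrix} f & p_Y\\ i^X & k\end{pmatrix}\colon X\oplus I_Y\to Y\oplus I^X$$
with a suitable correction term $k\colon I_Y\to I^X$. Then look for an inverse of the form
$$\begin{pmatrix} g & s\\ t & u\end{pmatrix}\colon Y\oplus I^X\to X\oplus I_Y.$$
Composing the two matrices gives equations in the entries. The diagonal entries $gf+p\ldots$ are already identities up to the terms $s\,i^X$ and $p_Y t$. The off-diagonal conditions require $u$ and $k$ to solve equations such as
$$i^X g+k\,t=0,\qquad f s+p_Y u=0.$$
Each of these is solvable precisely because the relevant morphisms factor through $\Isub$ via the approximations, using the approximation property once more. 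If exact matrix inverses are hard to arrange, the fallback is a Whitehead-type trick. Stabilize by adding $I^X\oplus I_Y$ to both sides and use the elementary-matrix identity, which shows that if $X\oplus A\cong Y\oplus B$ via split monos whose complements lie in $\Isub$, then the block matrix becomes invertible. That yields the required isomorphism with $I,J\in\Isub$, where $\Isub$ is additive, and its $(1,1)$ entry is still $f$.
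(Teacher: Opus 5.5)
Your proposal has a genuine gap, and it sits where you expected the obstacle. The paper does not prove this lemma; it quotes it from Iitsuka. So the question is what any proof must contain.

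\textbf{The gap.} You never show that a complement of $X$ in $Y\oplus I^X$, or of $Y$ in $X\oplus I_Y$, exists and lies in $\Isub$. That is the whole content of the lemma.

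\emph{The first attempt fails in general.} With $J=I_Y$ and $I=I^X$, you never addressed the $(2,2)$ entries, and invertibility also requires $tp_Y+uk=1_{I_Y}$ and $i^Xs+ku=1_{I^X}$. Take $Y=X$ and $f=1_X$. Row and column operations reduce $\bigl(\begin{smallmatrix}1_X&p_X\\ i^X&k\end{smallmatrix}\bigr)$ to $1_X\oplus(k-i^Xp_X)$, which forces $I_X\cong I^X$. This already fails when the two fixed approximations of $X$ are a projective cover and an injective hull that are not isomorphic, as happens in Frobenius categories.

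\emph{The Whitehead fallback simply assumes the complements lie in $\Isub$.} Moreover, no argument using only the data you actually use can work. Those data are the factorizations $1_X-gf=s\,i^X$ and $1_Y-fg=p_Yt$, together with additivity of $\Isub$. Here is a counterexample built from exactly these data:
\begin{itemize}
\item Let $\Lambda=k\times k$, with indecomposable projectives $P_1,P_2$.
\item Let $\C\subseteq\mathrm{proj}\,\Lambda$ consist of the objects $P_1^m\oplus P_2^n$ with $m\geq n$.
\item Let $\Isub$ consist of the objects $P_1^n\oplus P_2^n$.
\end{itemize}
Then $\Isub$ is functorially finite and closed under direct summands in $\C$. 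The map $P_1\to0$ is invertible modulo $[\Isub]$, because $1_{P_1}$ factors through $P_1\oplus P_2$. Yet $P_1\oplus J\cong I$ with $I,J\in\Isub$ is impossible. What fails is that no left $\Isub$-approximation of $P_1$ is an inflation: its cone would have to be $P_1^{n-1}\oplus P_2^{n}\notin\C$. So the word ``strongly'' is essential. You record that $i^X$ and $p_Y$ are an inflation and a deflation, but you never use this.

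\textbf{The missing step.}
\begin{enumerate}
\item The map $(f\ \ p_Y)=(f\ \ 1_Y)\circ(1_X\oplus p_Y)$ is a deflation by $(\mathrm{ET4})^{\mathrm{op}}$. Choose an $\s$-triangle $K\xrightarrow{\kappa}X\oplus I_Y\xrightarrow{(f\ p_Y)}Y\xdash{\delta}$.
\item Your section $\binom{g}{t}$ forces $\delta=0$. Hence $\kappa$ has a retraction $\rho=(\rho_1\ \ \rho_2)$, and $\bigl(\begin{smallmatrix}f&p_Y\\ \rho_1&\rho_2\end{smallmatrix}\bigr)\colon X\oplus I_Y\to Y\oplus K$ is an isomorphism.
\item In $\C/[\Isub]$ this becomes an isomorphism $X\to Y\oplus K$ whose first component $\underline{f}$ is invertible. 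This forces $\underline{1_K}=0$, so $1_K=ab$ with $b\colon K\to I_0$ and $I_0\in\Isub$.
\item The map $b\rho$ factors through the left approximation $\iota=i^X\oplus 1_{I_Y}$, so $\iota\kappa$ is a split monomorphism. It is also an inflation by $(\mathrm{ET4})$.
\item Therefore the $\s$-triangle of $\iota\kappa$ splits, and $K$ is a direct summand of $I^X\oplus I_Y$. Since $\Isub$ is closed under direct summands (part of strong functorial finiteness), $K\in\Isub$.
\end{enumerate}
This gives the lemma with $J=I_Y$, $j=p_Y$, $I=K$ and $(i\ \ k)=\rho$.
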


\subsection{Premutation triples}
We now fix the data used throughout the paper. The following formulation is equivalent to \cite[Conditions~3.1, 3.9 and Definition~3.11]{Iitsuka}.

\begin{definition}\label{def:premutation}
A triple $(\Ssub,\Zsub,\Vsub)$ of subcategories of $\C$ is a \emph{premutation triple} if the following conditions hold.
\begin{enumerate}
\item[(MT1)]
$$
  \Ssub\cap\Zsub=\Zsub\cap\Vsub=:\Isub,
$$
and $\Isub$ is strongly functorially finite in $\Zsub$.

\item[(MT2)] The relative orthogonality conditions
$$
  \E^{\Isub}(\Ssub,\Zsub)=0,
  \hspace{2mm}
  \E_{\Isub}(\Ssub,\Zsub\langle-1\rangle)=0,
$$
and
$$
  \E_{\Isub}(\Zsub,\Vsub)=0,
 \hspace{2mm}
  \E^{\Isub}(\Zsub\langle1\rangle,\Vsub)=0
$$
hold.

\item[(MT3)] Put
$$
  \Uti=\CoCone_{\E^{\Isub}}(\Zsub,\Ssub),
\hspace{2mm}
  \Tti=\Cone_{\E_{\Isub}}(\Vsub,\Zsub).
$$
Then
$$
  \Cone_{\E^{\Isub}}(\Zsub,\Zsub)\subseteq\Uti,
\hspace{2mm}
  \CoCone_{\E_{\Isub}}(\Zsub,\Zsub)\subseteq\Tti,
$$
$\Ssub$ and $\Zsub$ are extension closed in $(\C,\E^{\Isub},\s^{\Isub})$, and $\Zsub$ and $\Vsub$ are extension closed in $(\C,\E_{\Isub},\s_{\Isub})$.
\end{enumerate}
\end{definition}

Iitsuka \cite[Example~3.13]{Iitsuka} shows that premutation triples include the Frobenius case, rigid mutation pairs in the sense of Iyama and Yoshino \cite[Definition~2.5]{IyamaYoshino}, orthogonal mutation pairs related to simple-minded systems in the sense of Coelho Sim\~oes and Pauksztello \cite[Definition~3.2 and Setup~3.5]{SimoesPauksztello}, and triples obtained from concentric twin cotorsion pairs in the sense of Nakaoka \cite[Definition~3.3]{Nakaoka}.

By \cite[Lemma~3.6(1) and Remark~3.7(3)]{Iitsuka}, for $U\in\Uti$, Iitsuka constructs an $\s^{\Isub}$-triangle
\begin{equation}\label{eq:sigma-reflection}
  U\xrightarrow{h^U}\sigma U\longrightarrow S^U\dashrightarrow,
\quad\mbox{where}~~\sigma U\in\Zsub,~S^U\in\Ssub,
\end{equation}
and obtains a functor $\sigma:\Uti/[\Isub]\to\Zsub/[\Isub]$ which is left adjoint to the inclusion. Dually, for $T\in\Tti$ there is an $\s_{\Isub}$-triangle
\begin{equation}\label{eq:omega-coreflection}
  V_T\longrightarrow\omega T\xrightarrow{h_T}T\dashrightarrow,
  \quad\mbox{where}~ \omega T\in\Zsub,~ V_T\in\Vsub,
\end{equation}
and a right adjoint $\omega:\Tti/[\Isub]\to\Zsub/[\Isub]$ by \cite[Lemma~3.6(2)]{Iitsuka}.
In the rest of this paper, $h^U$ always denotes the fixed reflection morphism in \eqref{eq:sigma-reflection}, while $h_T$ denotes the fixed coreflection morphism in \eqref{eq:omega-coreflection}. For example, $h^{Z\langle1\rangle}$ and $h_{Z\langle-1\rangle}$ are obtained by substituting these objects for $U$ and $T$, respectively.

Iitsuka \cite[Definition~3.14]{Iitsuka} defines the right and left mutation functors by
\begin{equation*}
  \Sigma=\sigma\circ\langle1\rangle,
\hspace{2mm}
  \Omega=\omega\circ\langle-1\rangle.
\end{equation*}
We use the following structural result throughout the proof.

\begin{theorem}\label{thm:iitsuka-pretri}
Let $(\Ssub,\Zsub,\Vsub)$ be a premutation triple and put $\underlineZ=\Zsub/[\Isub]$. By \cite[Theorem~3.37]{Iitsuka},
$$
  (\underlineZ,\Sigma,\Omega,\nabla,\Delta)
$$
is a pretriangulated category. Here $\nabla$ and $\Delta$ are the classes of standard right and left triangles defined in \cite[Definition~3.26 and Definition~3.35]{Iitsuka}, respectively. Moreover, by \cite[Lemma~3.16]{Iitsuka}, $(\Sigma,\Omega)$ is an adjoint pair, with $\Sigma$ left adjoint to $\Omega$. Equivalently, there are natural bijections
$$
  \Hom_{\underlineZ}(\Sigma X,Y)
  \cong
  \Hom_{\underlineZ}(X,\Omega Y)
$$
for all $X,Y\in\underlineZ$.
\end{theorem}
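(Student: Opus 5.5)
The statement just recorded, Theorem~\ref{thm:iitsuka-pretri}, is a compilation of results already proved by Iitsuka, so the plan is simply to assemble them and check that the hypotheses match our formulation in Definition~\ref{def:premutation}. I would first observe that (MT1)--(MT3) coincide with \cite[Conditions~3.1, 3.9 and Definition~3.11]{Iitsuka}. I would also check that the relative approximation triangles \eqref{eq:cosyzygy} and their duals lie in $\E^{\Isub}_{\Isub}$, by \cite[Lemma~2.24, Lemma~2.32 and Remark~3.2]{Iitsuka}. Together these mean that all of Iitsuka's standing assumptions for Section~3 of \cite{Iitsuka} are in force. In particular $\langle1\rangle,\langle-1\rangle$ descend to functors on $\C/[\Isub]$. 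Moreover $\sigma$ and $\omega$ from \eqref{eq:sigma-reflection} and \eqref{eq:omega-coreflection} are well-defined adjoints to the inclusions $\underlineZ\hookrightarrow\Uti/[\Isub]$ and $\underlineZ\hookrightarrow\Tti/[\Isub]$. Hence $\Sigma=\sigma\circ\langle1\rangle$ and $\Omega=\omega\circ\langle-1\rangle$ are well-defined endofunctors of $\underlineZ$; this uses $\Zsub\langle1\rangle\subseteq\Uti$ and $\Zsub\langle-1\rangle\subseteq\Tti$ from (MT3).

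For the adjunction, I would compose natural bijections, where the first and last use that $\sigma\dashv\mathrm{incl}$ and $\mathrm{incl}\dashv\omega$:
$$
\Hom_{\underlineZ}(\sigma(X\langle1\rangle),Y)\cong\Hom_{\C/[\Isub]}(X\langle1\rangle,Y)\cong\Hom_{\C/[\Isub]}(X,Y\langle-1\rangle)\cong\Hom_{\underlineZ}(X,\omega(Y\langle-1\rangle)).
$$
The middle bijection is the step I expect to be the main obstacle. It is obtained by lifting a morphism $X\langle1\rangle\to Y$ along $I^X\to X\langle1\rangle$ to a morphism of relative triangles. Dually, one factors a morphism $X\to Y\langle-1\rangle$ through $p_Y$. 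The key difficulty is well-definedness modulo $[\Isub]$, which uses $\lambda^X\in\E^{\Isub}$ and $\lambda_Y\in\E_{\Isub}$ via the monic/epic criterion of \cite[Lemma~2.20]{Iitsuka}. This is precisely \cite[Lemma~3.16]{Iitsuka}, which I would cite rather than reprove.

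Finally, the pretriangulated structure is \cite[Theorem~3.37]{Iitsuka}. The classes $\nabla$ and $\Delta$ are the standard right and left triangles of \cite[Definitions~3.26 and~3.35]{Iitsuka}. Iitsuka verifies the right and left triangulated axioms, with the octahedral steps coming from Axioms~\ref{axiom:et4} and~\ref{axiom:et4op} in the relative structures of Definition~\ref{def:relative-structures}. He also verifies the Beligiannis--Reiten compatibility between $\nabla$ and $\Delta$ under the adjunction. So the argument reduces to citing these results once the hypotheses have been matched.
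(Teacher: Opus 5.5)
Your proposal is correct and matches the paper, which also treats this statement as a compilation of \cite[Lemma~3.16 and Theorem~3.37]{Iitsuka} once Definition~\ref{def:premutation} is matched with Iitsuka's hypotheses; citing rather than reproving is exactly what the paper does. Your extra sketch of the adjunction via $\Hom_{\C/[\Isub]}(X\langle1\rangle,Y)\cong\Hom_{\C/[\Isub]}(X,Y\langle-1\rangle)$ is sound and agrees with the explicit correspondence of \cite[Remark~3.18]{Iitsuka} used in Lemma~\ref{lem:roundtrip}, with one harmless difference: completing to a morphism of triangles via $(\mathrm{ET3})$, as you describe, gives $a_{*}\lambda^X=b^{*}\lambda_Y$, whereas Iitsuka's normalization carries a minus sign, which only changes the unit and counit by a sign.
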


For a morphism $a:X\to Y$ in $\Zsub$, the standard right triangle of $\ul{a}$ has the form
$$
 X\xrightarrow{\ul{a}}Y\longrightarrow\sigma C^a
 \longrightarrow\Sigma X,
$$
where $C^a$ is obtained from the fixed approximation triangle as in \cite[Notation~3.22 and Definition~3.26]{Iitsuka}. Dually, the standard left triangle of a morphism $b:Y\to Z$ has the form
$$
 \Omega Z\longrightarrow\omega C_b\longrightarrow Y
 \xrightarrow{\ul{b}}Z
$$
as in \cite[Notation~3.32 and Definition~3.35]{Iitsuka}. The classes of sequences isomorphic to these standard triangles are denoted by $\nabla$ and $\Delta$, respectively.

In \cite[Notation~3.17]{Iitsuka}, the unit and counit of the adjunction are denoted by
$$
  \alpha\colon\Id\longrightarrow\Omega\Sigma,
  \hspace{2mm}
  \beta\colon\Sigma\Omega\longrightarrow\Id.
$$
Thus $\alpha_Z$ and $\beta_Z$ are morphisms in $\ul{\Zsub}$. Whenever an ambient morphism is required, we choose representatives
$$
  \widetilde\alpha_Z\colon Z\longrightarrow\Omega\Sigma Z,
  \hspace{2mm}
  \widetilde\beta_Z\colon \Sigma\Omega Z\longrightarrow Z
$$
in $\Zsub$. The following observation records the precise form in which the explicit adjunction diagram of \cite[Remark~3.18]{Iitsuka} will be used.

\begin{lemma}\label{lem:roundtrip}
Let $Z\in\Zsub$.
\begin{enumerate}
\item Put $W=\Omega Z$ and $U=W\langle1\rangle$. For any representative $\widetilde\beta_Z$ of $\beta_Z$,
\begin{equation}\label{eq:beta-identity}
  \lambda_Z\,\widetilde\beta_Z\,h^U
  =-h_{Z\langle-1\rangle}\,\lambda^W.
\end{equation}
\item Put $W=\Sigma Z$ and $T=W\langle-1\rangle$. For any representative $\widetilde\alpha_Z$ of $\alpha_Z$,
\begin{equation}\label{eq:alpha-identity}
  h_T\,\widetilde\alpha_Z\,\lambda^Z
  =-\lambda_W\,h^{Z\langle1\rangle}.
\end{equation}
\end{enumerate}
Here compositions involving an extension are read in the standard extriangulated sense. In particular, $\lambda^W$ in {\rm part~(1)} is the cosyzygy extension of $W$, whereas $\lambda_W$ in
{\rm part~(2)} is the syzygy extension of $W$.
\end{lemma}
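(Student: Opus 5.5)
The plan is to unpack the explicit description of the adjunction in \cite[Remark~3.18]{Iitsuka} and read off the two extension identities. I treat part~(1); part~(2) is its dual.

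\textbf{How $\beta_Z$ is defined.} Put $W=\Omega Z=\omega(Z\langle-1\rangle)$ and $U=W\langle1\rangle$. The counit $\beta_Z\colon\sigma U\to Z$ in $\ul{\Zsub}$ is built in two steps, by the universal properties of $\sigma$ and of the cosyzygy.

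First, the coreflection $h_{Z\langle-1\rangle}\colon W\to Z\langle-1\rangle$ is composed with the inflation $Z\langle-1\rangle\to I_Z$. The composite $W\to I_Z$ extends along $i^W\colon W\to I^W$, because $i^W$ is $\Isub$-monic by \cite[Lemma~2.20]{Iitsuka}.

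Second, this extension gives a morphism of $\s$-triangles from the cosyzygy triangle of $W$ to the syzygy triangle of $Z$:
$$
W\to I^W\to U\xdash{\lambda^W}
\quad\longrightarrow\quad
Z\langle-1\rangle\to I_Z\to Z\xdash{\lambda_Z}.
$$
It consists of $h_{Z\langle-1\rangle}$ on the left, some map on the middle, and a third component $c\colon U\to Z$ on the right. By \cite[Remark~3.18]{Iitsuka}, or by the construction of the adjunction there, $\beta_Z$ is the unique morphism with $\widetilde\beta_Z h^U\equiv c$ modulo $[\Isub]$. This uses the reflection property of $h^U$.

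\textbf{The extension identity.} For a morphism of $\s$-triangles we have the standard relation $(h_{Z\langle-1\rangle})_*\lambda^W=c^*\lambda_Z$, that is, $h_{Z\langle-1\rangle}\lambda^W=\lambda_Z c$. The sign in \eqref{eq:beta-identity} comes from Iitsuka's sign convention. His adjunction uses the negative of the connecting map, so that the triangles rotate correctly. I therefore follow his Remark~3.18 verbatim and take $c$ with the sign $-$ that appears there. This is the one step where care is needed.

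\textbf{Independence of the representative.} Two representatives of $\beta_Z$, and likewise two choices of $c$, differ by a morphism factoring through some $I\in\Isub$. It remains to show that $\lambda_Z\,\phi=0$ for every $\phi\colon U\to Z$ factoring through $\Isub$. This holds because $\lambda_Z\in\E_{\Isub}(Z,Z\langle-1\rangle)$, since $p_Z$ is $\Isub$-epic, so $\lambda_Z$ composed with any map $I\to Z$ is zero.

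Composing $\widetilde\beta_Z$ with $h^U$ preserves this property. Hence $\lambda_Z\widetilde\beta_Z h^U=\lambda_Z c=-h_{Z\langle-1\rangle}\lambda^W$ for every representative $\widetilde\beta_Z$.

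Part~(2) follows dually. It uses $\lambda^Z\in\E^{\Isub}$, so that any map $Z\to I$ kills $\lambda^Z$ from the other side. It also uses that $\widetilde\alpha_Z$ is determined by the morphism of triangles from the cosyzygy triangle of $Z$ to the syzygy triangle of $W=\Sigma Z$, through $h^{Z\langle1\rangle}$ and $h_T$.

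The main obstacle is matching the sign and orientation conventions of \cite[Remark~3.18]{Iitsuka} exactly.
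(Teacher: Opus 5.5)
Your proposal follows the paper's proof. The identity for one particular representative, sign included, is read off from Iitsuka's Remark~3.18; the paper does this by substituting $f=1_{\Omega Z}$, resp.\ $f'=1_{\Sigma Z}$, into $h_{Y\langle-1\rangle}f\lambda^X=-\lambda_Yf'h^{X\langle1\rangle}$. Independence of the representative then follows from $\lambda_Z\in\E_{\Isub}$ and $\lambda^Z\in\E^{\Isub}$, exactly as you argue.

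One point to be aware of: your morphism-of-triangles unpacking on its own gives $h_{Z\langle-1\rangle}\lambda^W=\lambda_Zc$ with a plus sign. So the minus sign rests entirely on Iitsuka's convention in Remark~3.18, which is also all the paper relies on.
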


\begin{proof}
By \cite[Remark~3.18]{Iitsuka}, the adjunction correspondence is described by an explicit diagram, with the morphisms in $\ul{\Zsub}$ determined only modulo maps factoring through $\Isub$.
More explicitly, suppose that
$\ul{f}:X\to\Omega Y$ corresponds under the adjunction to
$\ul{f'}:\Sigma X\to Y$. Then ambient representatives $f$ and $f'$ can be
chosen so that
$$
  h_{Y\langle-1\rangle}f\lambda^X
  =-\lambda_Yf'h^{X\langle1\rangle}.
$$
Take $\ul{f}=1_{\Omega Z}$. We may choose $f=1_{\Omega Z}$ as its ambient
representative. The corresponding quotient morphism is
$\ul{f'}=\beta_Z$. Let $b_Z:\Sigma\Omega Z\to Z$ be the ambient
representative $f'$ supplied by the correspondence. Substituting
$X=\Omega Z$ and $Y=Z$ gives the formula below.
$$
  \lambda_Z b_Z h^U=-h_{Z\langle-1\rangle}\lambda^W.
$$
If $\widetilde\beta_Z$ is any other representative, then $\widetilde\beta_Z-b_Z$ factors through an object of $\Isub$. Since the fixed syzygy extension $\lambda_Z$ belongs to $\E_{\Isub}$, pullback along a morphism factoring through $\Isub$ is zero. Hence
$$
  \lambda_Z(\widetilde\beta_Z-b_Z)h^U=0,
$$
which proves \eqref{eq:beta-identity}.

For the second formula, take $1_{\Sigma Z}$ in the same adjunction correspondence. Its image under the inverse correspondence is $\alpha_Z$.
In the preceding formula, take $X=Z$, $Y=\Sigma Z$,
$\ul{f}=\alpha_Z$ and $\ul{f'}=1_{\Sigma Z}$. We may choose
$f'=1_{\Sigma Z}$ as an ambient representative. Let $a_Z$ be the ambient
representative $f$ supplied by the correspondence. This gives the formula
below.
$$
  h_Ta_Z\lambda^Z=-\lambda_Wh^{Z\langle1\rangle}.
$$
Any other representative differs from $a_Z$ by a morphism factoring through $\Isub$. The cosyzygy extension $\lambda^Z$ lies in $\E^{\Isub}$, so pushforward along such a morphism is zero. This gives \eqref{eq:alpha-identity}.
\end{proof}

Compared with Iitsuka's formula in \cite[Remark~3.18]{Iitsuka}, we also record the independence of representatives. This follows immediately from the defining relative extension conditions.

\subsection{The condition \texorpdfstring{$(\mathrm{MT4})$}{(MT4)}}

We next recall the functors $(\,\cdot\,)^{-}$ and $(\,\cdot\,)^{+}$ introduced by Iitsuka \cite[Definition~4.1, Notation~4.4, and Proposition~4.5]{Iitsuka}.

\begin{definition}[The functors $(\,\cdot\,)^{-}$ and $(\,\cdot\,)^{+}$]\label{def:corrections}
Define
\begin{equation}\label{eq:ut-minus-tt-plus}
  \Uti^{-}=\CoCone_{\E^{\Isub}}(\Isub,\Ssub),
  \hspace{2mm}
  \Tti^{+}=\Cone_{\E_{\Isub}}(\Vsub,\Isub).
\end{equation}
For $U\in\Uti$, combine \eqref{eq:sigma-reflection} with the syzygy triangle of $\sigma U$. Iitsuka's construction in \cite[Proposition~4.5]{Iitsuka} uses $(\mathrm{ET4})$ and yields an object $U^{-}$ fitting into an $\E^{\Isub}_{\Isub}$-triangle
\begin{equation*}
  \sigma U\langle-1\rangle\longrightarrow U^{-}\longrightarrow U
  \xdash{\chi_U}.
\end{equation*}
Its extension is
$$
  \chi_U=\lambda_{\sigma U}h^U.
$$
Dually, for $T\in\Tti$ there is an object $T^{+}$ in an $\E^{\Isub}_{\Isub}$-triangle
\begin{equation*}
  T\longrightarrow T^{+}\longrightarrow\omega T\langle1\rangle
  \xdash{\chi^T}.
\end{equation*}
Its extension is
$$
  \chi^T=h_T\lambda^{\omega T}.
$$
Iitsuka proves that these constructions induce adjoints to the inclusions $\Uti^{-}\subseteq\Uti$ and $\Tti^{+}\subseteq\Tti$ \cite[Proposition~4.5]{Iitsuka}.

More precisely, they induce functors
$$
 (\,\cdot\,)^{-}:\ul{\Uti}\longrightarrow\ul{\Uti^{-}},
 \quad
 (\,\cdot\,)^{+}:\ul{\Tti}\longrightarrow\ul{\Tti^{+}}.
$$
Fix the defining triangles once and for all. If $\mathcal X\subseteq\Uti$, let $\mathcal X^{-}$ be the full subcategory of $\C$ formed by the objects isomorphic in $\C$ to a chosen object $X^{-}$ with $X\in\mathcal X$. Define $\mathcal Y^{+}$ dually for $\mathcal Y\subseteq\Tti$. The notation $\ul{\mathcal X}^{-}$ denotes the full subcategory of $\ul{\Uti^{-}}$ formed by the images of the chosen objects, and $\ul{\mathcal Y}^{+}$ is defined dually. Thus $\mathcal X^{-}$ and $\mathcal Y^{+}$ are subcategories of $\C$, whereas $\ul{\mathcal X}^{-}$ and $\ul{\mathcal Y}^{+}$ are subcategories of the quotient.
\end{definition}

The inclusions $\ul{\Uti^{-}}\to\ul{\Uti}$ and
$\ul{\Tti^{+}}\to\ul{\Tti}$ are fully faithful. Consequently, the unit of
the first adjunction and the counit of the second are natural isomorphisms on
the smaller subcategories. In terms of essential images, this gives
$$
  \ul{\Uti^{-}}=\ul{\Uti}^{-},
  \quad
  \ul{\Tti^{+}}=\ul{\Tti}^{+}.
$$
We use these identifications below when we pass from membership of a chosen
object in $\Uti^{-}$ or $\Tti^{+}$ to an inclusion of subcategories in the
ideal quotient.

Iitsuka \cite[Condition~4.6]{Iitsuka} defines condition $(\mathrm{MT4})'$ by
\begin{equation}\label{eq:mt4prime}
  \ul{\Zsub\langle1\rangle}^{-}\subseteq\ul{\Tti}^{+},
  \quad
  \ul{\Zsub\langle-1\rangle}^{+}\subseteq\ul{\Uti}^{-}.
\end{equation}

By \cite[Remark~4.9(1)]{Iitsuka}, $(\mathrm{MT4})'$ is equivalent to
\begin{equation*}
  (\mathrm{MT4}):\quad
  \ul{\Zsub\langle1\rangle}^{-}=\ul{\Zsub\langle-1\rangle}^{+},
\end{equation*}
and Iitsuka \cite[Theorem~4.8]{Iitsuka} proves that these equivalent conditions imply that the pretriangulated structure of Theorem~\ref{thm:iitsuka-pretri} is triangulated. Following Iitsuka \cite[Definition~4.7]{Iitsuka}, we call a premutation triple satisfying $(\mathrm{MT4})'$ a \emph{mutation triple}. By \cite[Remark~4.9(1)]{Iitsuka}, this is equivalent to requiring $(\mathrm{MT4})$.

\section{The converse criterion}\label{sec:converse}

We now prove that the inclusions in \eqref{eq:mt4prime} are forced by invertibility of the counit and unit, respectively. The two arguments are dual, but we give both in detail because they take place in different relative extriangulated structures.

\subsection{The counit and the cocone condition}

\begin{lemma}\label{lem:counit-half}
Let $Z\in\Zsub$. If
$$
  \beta_Z:\Sigma\Omega Z\longrightarrow Z
$$
is an isomorphism in $\ul{\Zsub}$, then the following stronger statement holds in $\C$:
$$
  Z\langle-1\rangle^{+}\in\Uti^{-}.
$$
Consequently, if every component of the counit $\beta$ is invertible, then
$$
  \ul{\Zsub\langle-1\rangle}^{+}\subseteq\ul{\Uti}^{-}.
$$
\end{lemma}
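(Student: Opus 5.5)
The plan is to realise the extension $\chi^T$ that defines $T^{+}$, where $T=Z\langle-1\rangle$, as a pullback of $\lambda_Z$. The identity \eqref{eq:beta-identity} of Lemma~\ref{lem:roundtrip} supplies exactly this. Once $\chi^T$ is written that way, I would build $T^{+}$ by the dual octahedral axiom out of the syzygy triangle of $Z$ and the reflection triangle \eqref{eq:sigma-reflection}, and read off an $\s^{\Isub}$-triangle $T^{+}\to I\to S$ with $I\in\Isub$ and $S\in\Ssub$.

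\emph{Identifying the objects.} With $T=Z\langle-1\rangle\in\Tti$, we have $\omega T=\Omega Z=W$ by definition of $\Omega=\omega\circ\langle-1\rangle$. Put $U=W\langle1\rangle$, so that $\sigma U=\Sigma\Omega Z$. The defining extension of $T^{+}$ is $\chi^T=h_T\lambda^{W}$. Lemma~\ref{lem:roundtrip}(1) then gives
$$\chi^T=-\lambda_Z\,\widetilde\beta_Z\,h^U .$$
Since $-\chi^T$ is realised by a triangle whose middle term is isomorphic to that of $\chi^T$, it suffices to show the following: a realisation of $\lambda_Z\widetilde\beta_Z h^U$, of the form $T\to M'\to U$, has middle term $M'$ lying in $\Uti^{-}$.

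\emph{First step: pull back along $\widetilde\beta_Z$.} Since $\beta_Z$ is an isomorphism in $\ul{\Zsub}$, Lemma~\ref{lem:stable-lift} gives an isomorphism
$$\sigma U\oplus J\cong Z\oplus I,$$
whose $(1,1)$-entry is $\widetilde\beta_Z$ and where $I,J\in\Isub$. Pull $\lambda_Z$ back along this isomorphism, after adding to $\lambda_Z$ the zero extension on the summand $I$. Because $\lambda_Z\in\E_{\Isub}$, its pullback to any summand in $\Isub$ vanishes. From this I expect to obtain an $\s^{\Isub}_{\Isub}$-triangle $T\to M\to\sigma U$ with extension $\lambda_Z\widetilde\beta_Z$, whose middle term satisfies $M\oplus J\cong I_Z\oplus I$. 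As $\Isub$ is closed under direct summands (it is strongly functorially finite), this gives $M\in\Isub$.

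\emph{Second step: pull back along $h^U$.} Apply $(\mathrm{ET4})^{\mathrm{op}}$ (Axiom~\ref{axiom:et4op}) in the relative structure $(\C,\E^{\Isub},\s^{\Isub})$ to two triangles: $T\to M\to\sigma U$, and the $\s^{\Isub}$-triangle $U\xrightarrow{h^U}\sigma U\to S^U$ of \eqref{eq:sigma-reflection}. This yields $\s^{\Isub}$-triangles
$$T\to E\to U \qquad\text{and}\qquad E\to M\to S^U .$$
By the extension identities, the first has extension $\lambda_Z\widetilde\beta_Z h^U=-\chi^T$. Hence $E\cong T^{+}$ in $\C$. The second triangle exhibits $T^{+}\in\CoCone_{\E^{\Isub}}(\Isub,\Ssub)=\Uti^{-}$.

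\emph{Consequence.} The statement about subcategories follows from this object-level result and the identification $\ul{\Uti^{-}}=\ul{\Uti}^{-}$.

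The step I expect to be the main obstacle is the relative bookkeeping in the first step. Two things must be checked. First, the pulled-back triangle must genuinely lie in $\E^{\Isub}$, so that $(\mathrm{ET4})^{\mathrm{op}}$ can be applied inside the closed subfunctor. Second, the direct-sum adjustment coming from Lemma~\ref{lem:stable-lift} must leave the middle term in $\Isub$, rather than merely stably isomorphic to an object of $\Isub$. For the first point I would use that $E^{\Isub}$ is a subfunctor, so pulling back preserves membership. I would also use that a pullback along an isomorphism of $\lambda_Z\oplus 0$, which lies in $\E^{\Isub}_{\Isub}$, stays in that subfunctor.
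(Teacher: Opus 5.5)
Your proof is correct and is essentially the paper's argument. Both proofs apply $(\mathrm{ET4})^{\mathrm{op}}$ in $(\C,\E^{\Isub},\s^{\Isub})$ to the syzygy triangle of $Z$ and the reflection triangle of $U=\Omega Z\langle1\rangle$, and then use Lemma~\ref{lem:roundtrip}(1) to identify the resulting extension with $-\chi^T$. The only difference is where you absorb the stable-lift correction: the paper transports the reflection triangle along the isomorphism and pads the syzygy triangle with a split summand, so the middle term is literally $I_Z\oplus I_1$, whereas you first pull $\lambda_Z$ back along $\widetilde\beta_Z$ and use summand-closure of $\Isub$ to get $M\in\Isub$, which is legitimate under the standing conventions.
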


\begin{proof}
Set
$$
  T=Z\langle-1\rangle,
  \quad W=\Omega Z,
  \quad U=W\langle1\rangle.
$$
By the chosen relative triangles and $(\mathrm{MT3})$, we have
$$
 T\in\CoCone_{\E_{\Isub}}(\Zsub,\Zsub)\subseteq\Tti,
 \quad
 U\in\Cone_{\E^{\Isub}}(\Zsub,\Zsub)\subseteq\Uti.
$$
Choose a representative
$$
  \widetilde\beta_Z:\Sigma\Omega Z\longrightarrow Z
$$
of $\beta_Z$. Since $\sigma U=\Sigma W=\Sigma\Omega Z$, the reflection triangle \eqref{eq:sigma-reflection} takes the form
\begin{equation}\label{eq:beta-reflection}
  U\xrightarrow{h^U}\Sigma\Omega Z\longrightarrow S^U\dashrightarrow,
  \qquad S^U\in\Ssub.
\end{equation}

Since $\Zsub$ is full, the isomorphism $\beta_Z$ in $\ul{\Zsub}$ is also an isomorphism between the same objects in $\C/[\Isub]$. By Lemma~\ref{lem:stable-lift}, applied to $\widetilde\beta_Z$, there are $I_0,I_1\in\Isub$ and an isomorphism in $\C$
\begin{equation*}
  B=
  \begin{pmatrix}
    \widetilde\beta_Z & a\\ b & c
  \end{pmatrix}\colon
  \Sigma\Omega Z\oplus I_0\xrightarrow{\sim} Z\oplus I_1.
\end{equation*}
Put
$$
  Y=Z\oplus I_1,
  \quad S'=S^U\oplus I_0.
$$
Taking the direct sum of \eqref{eq:beta-reflection} with a split triangle and transporting it along $B$, we obtain an $\s^{\Isub}$-triangle
\begin{equation}\label{eq:beta-first-input}
  U\xrightarrow{q}Y\xrightarrow{r}S'\dashrightarrow,
  ~~\mbox{where}~ S'\in\Ssub,
\end{equation}
where
\begin{equation}\label{eq:q-def}
  q=B\binom{h^U}{0}.
\end{equation}
In particular, the $Z$-component of $q$ is $\widetilde\beta_Zh^U$.

On the other hand, the fixed syzygy triangle of $Z$ is
$$
  T\longrightarrow I_Z\xrightarrow{p_Z}Z\xdash{\lambda_Z}.
$$
For objects of $\Zsub$, this triangle belongs to the intersection relative structure by the dual of \cite[Lemma~2.32]{Iitsuka}, using the orthogonality in (MT2). After taking the direct sum with the split triangle on $I_1$, we get an $\s^{\Isub}$-triangle
\begin{equation}\label{eq:beta-second-input}
  T\xrightarrow{i}I'\xrightarrow{p}Y
  \xdash{\widetilde\lambda_Z},
  ~~~\mbox{where}~
  I'=I_Z\oplus I_1\in\Isub,
\end{equation}
where
$$
  \widetilde\lambda_Z=(\lambda_Z,0).
$$

We may therefore apply $(\mathrm{ET4})^{\mathrm{op}}$ in the relative extriangulated category $(\C,\E^{\Isub},\s^{\Isub})$ to \eqref{eq:beta-first-input} and \eqref{eq:beta-second-input}. It gives an object $E_Z$ and $\s^{\Isub}$-triangles
\begin{equation}\label{eq:beta-output-one}
  T\xrightarrow{x}E_Z\xrightarrow{y}U
  \xdash{\chi_Z},
\end{equation}
and
\begin{equation}\label{eq:beta-output-two}
  E_Z\xrightarrow{c}I'\xrightarrow{rp}S'\dashrightarrow.
\end{equation}
We display the four conflations in this application of $(\mathrm{ET4})^{\mathrm{op}}$ as follows.
$$
\begin{CD}
T @>{x}>> E_Z @>{y}>> U\\
@V{1_T}VV @V{c}VV @V{q}VV\\
T @>{i}>> I' @>{p}>> Y\\
@. @V{rp}VV @V{r}VV\\
@. S' @= S'
\end{CD}
$$
This is the standard $(\mathrm{ET4})^{\mathrm{op}}$ diagram of Nakaoka--Palu \cite[Remark~2.22]{NakaokaPalu}. The top row is \eqref{eq:beta-output-one}. The middle row is \eqref{eq:beta-second-input}, the right column is \eqref{eq:beta-first-input}, and the middle column is \eqref{eq:beta-output-two}.

The extension compatibility in $(\mathrm{ET4})^{\mathrm{op}}$ says that the extension in \eqref{eq:beta-output-one} is the pullback of $\widetilde\lambda_Z$ along $q$. Hence
\begin{equation*}
  \chi_Z=q^{*}\widetilde\lambda_Z
  =(\widetilde\beta_Zh^U)^{*}\lambda_Z
  =\lambda_Z\,\widetilde\beta_Z\,h^U.
\end{equation*}
The second equality follows from the definition of $q$ in \eqref{eq:q-def} and the fact that the $I_1$-component of $\widetilde\lambda_Z$ is zero.

Now apply Lemma~\ref{lem:roundtrip}(i). Since $T=Z\langle-1\rangle$ and $W=\Omega Z$, we obtain
\begin{equation}\label{eq:chi-beta-identify}
  \chi_Z=-h_T\lambda^W.
\end{equation}
By the construction of $(\,\cdot\,)^{+}$ in \cite[Notation~4.4]{Iitsuka}, the extension $h_T\lambda^W$ is precisely the extension realized by the defining triangle
\begin{equation*}
  T\longrightarrow T^{+}\longrightarrow W\langle1\rangle=U\dashrightarrow.
\end{equation*}
Moreover, this defining extension lies in the intersection
$$
  \E^{\Isub}_{\Isub}(U,T)
  =\E^{\Isub}(U,T)\cap\E_{\Isub}(U,T).
$$
Equation \eqref{eq:chi-beta-identify} therefore shows that $\chi_Z$ lies in the intersection relative structure, although it was first obtained in $\E^{\Isub}$. Let
$$
  T\xrightarrow{s^T}T^+\longrightarrow U\xdash{h_T\lambda^W}
$$
be the triangle defining $T^+$. Since $\s$ is additive,
$$
 \s(-h_T\lambda^W)
 =\big[T\xrightarrow{-s^T}T^+\longrightarrow U\big].
$$
Thus \eqref{eq:beta-output-one} and this triangle are equivalent realizations of the same extension. In particular, their middle terms are isomorphic. Consequently,
\begin{equation}\label{eq:E-is-T-plus}
  E_Z\cong T^{+}=Z\langle-1\rangle^{+}.
\end{equation}

Finally, \eqref{eq:beta-output-two} has middle term $I'\in\Isub$ and third term $S'\in\Ssub$. By the definition \eqref{eq:ut-minus-tt-plus},
$$
  E_Z\in\CoCone_{\E^{\Isub}}(\Isub,\Ssub)=\Uti^{-}.
$$
Together with \eqref{eq:E-is-T-plus}, this gives
$$
  Z\langle-1\rangle^{+}\in\Uti^{-},
$$
as required.
For an arbitrary object $T'\in\Zsub\langle-1\rangle$, its image in the
quotient is isomorphic to the image of a chosen object
$Z\langle-1\rangle$. Applying the functor $(\,\cdot\,)^{+}$ gives
$\ul{(T')^{+}}\cong\ul{Z\langle-1\rangle^{+}}$. The object on the right is
the image of an object of $\Uti^{-}$. This proves the asserted inclusion of
essential images in $\C/[\Isub]$.
\end{proof}

\subsection{The unit and the cone condition}

\begin{lemma}\label{lem:unit-half}
Let $Z\in\Zsub$. If
$$
  \alpha_Z:Z\longrightarrow\Omega\Sigma Z
$$
is an isomorphism in $\ul{\Zsub}$, then the following stronger statement holds in $\C$:
$$
  Z\langle1\rangle^{-}\in\Tti^{+}.
$$
Consequently, if every component of the unit $\alpha$ is invertible, then
$$
  \ul{\Zsub\langle1\rangle}^{-}\subseteq\ul{\Tti}^{+}.
$$
\end{lemma}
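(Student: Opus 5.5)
The plan is to dualize the proof of Lemma~\ref{lem:counit-half}, working in $(\C,\E_{\Isub},\s_{\Isub})$ and using $(\mathrm{ET4})$ in place of $(\mathrm{ET4})^{\mathrm{op}}$. Set
$$
  U=Z\langle1\rangle,\quad W=\Sigma Z,\quad T=W\langle-1\rangle .
$$
By $(\mathrm{MT3})$ and the fixed approximation triangles we have $U\in\Cone_{\E^{\Isub}}(\Zsub,\Zsub)\subseteq\Uti$ and $T\in\CoCone_{\E_{\Isub}}(\Zsub,\Zsub)\subseteq\Tti$. Since $\omega T=\Omega\Sigma Z$, the coreflection triangle \eqref{eq:omega-coreflection} reads
$$
V_T\to\Omega\Sigma Z\xrightarrow{h_T}T\dashrightarrow,\qquad V_T\in\Vsub .
$$

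Choose a representative $\widetilde\alpha_Z$ of $\alpha_Z$ and apply Lemma~\ref{lem:stable-lift}. This gives an isomorphism
$$
A=\begin{pmatrix}\widetilde\alpha_Z & a\\ b& c\end{pmatrix}\colon Z\oplus J_0\xrightarrow{\sim}\Omega\Sigma Z\oplus J_1 .
$$
Add a split triangle on $J_1$ to the coreflection triangle and precompose with $A$. This yields an $\s_{\Isub}$-triangle
$$
V'\to Y\xrightarrow{q}T\dashrightarrow,\qquad Y=Z\oplus J_0,\quad V'=V_T\oplus J_1\in\Vsub,
$$
with $q=(h_T,0)A$, so the $Z$-component of $q$ is $h_T\widetilde\alpha_Z$. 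The summand $J_1$ lies in $\Vsub$ because $\Isub\subseteq\Vsub$ and $\Vsub$ is additive.

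Next take the cosyzygy triangle $Z\to I^Z\to U\xdash{\lambda^Z}$, which lies in the intersection structure, and add the split triangle on $J_0$. This gives an $\s_{\Isub}$-triangle
$$
Y\xrightarrow{i}I'\to U\xdash{\widetilde\lambda^Z},\qquad \widetilde\lambda^Z=\binom{\lambda^Z}{0},\quad I'=I^Z\oplus J_0\in\Isub .
$$
Apply $(\mathrm{ET4})$, in its dual-pushout form, in $(\C,\E_{\Isub},\s_{\Isub})$ to the pair formed by $V'\to Y\to T$ and $Y\to I'\to U$. It produces an object $E$ and two $\s_{\Isub}$-triangles
$$
T\to E\to U\xdash{\chi},\qquad V'\to I'\to E\dashrightarrow,
$$
with $\chi=q_*\widetilde\lambda^Z=h_T\widetilde\alpha_Z\lambda^Z$. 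The main obstacle is this step: I must arrange the pushout-type octahedron correctly (the Nakaoka--Palu $(\mathrm{ET4})$ diagram with the roles of the triangles placed as above) and verify the identity $\chi=q_*\widetilde\lambda^Z$. I would use the $(\mathrm{ET4})$ extension identities of Axiom~\ref{axiom:et4}, applied after rotating the roles of the triangles.

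By Lemma~\ref{lem:roundtrip}(2), $\chi=-\lambda_W h^{U}=-\chi_U$, and $\chi_U=\lambda_{\sigma U}h^U$ is the extension defining $U^-=Z\langle1\rangle^-$, since $\sigma U=W$. Because $\s$ is additive, the first triangle and the defining triangle of $U^-$ realize $\pm$ the same extension, so $E\cong Z\langle1\rangle^-$. The second triangle then shows
$$
E\in\Cone_{\E_{\Isub}}(\Vsub,\Isub)=\Tti^+ ,
$$
which is the claimed statement in $\C$. The inclusion $\ul{\Zsub\langle1\rangle}^-\subseteq\ul{\Tti}^+$ follows exactly as at the end of Lemma~\ref{lem:counit-half}: any object of $\Zsub\langle1\rangle$ is isomorphic in the quotient to a chosen $Z\langle1\rangle$, and $(\,\cdot\,)^-$ is a functor.
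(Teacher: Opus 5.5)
Your proposal is correct and follows the paper's proof essentially step for step. It uses the same stable lift $A$, the same $\s_{\Isub}$-triangles $V'\to Y\xrightarrow{q}T$ and $Y\to I'\to U$, the same application of $(\mathrm{ET4})$ in $(\C,\E_{\Isub},\s_{\Isub})$, and the same identification of $E$ with $Z\langle1\rangle^{-}$ via Lemma~\ref{lem:roundtrip}(2).

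The step you flag as the main obstacle needs no rotation or dual form. In Axiom~\ref{axiom:et4} take $A=V'$, $B=Y$, $D=T$, $f'=q$, $C=I'$, $F=U$ and $\delta'=\widetilde\lambda^Z$. The output triangle $D\to E\to F$ is then labelled $f'_{*}\delta'=q_{*}\widetilde\lambda^Z$ by the statement of the axiom itself.
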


\begin{proof}
Put
$$
  U=Z\langle1\rangle,
  \quad W=\Sigma Z,
  \quad T=W\langle-1\rangle.
$$
By the chosen relative triangles and $(\mathrm{MT3})$, we have
$$
 U\in\Cone_{\E^{\Isub}}(\Zsub,\Zsub)\subseteq\Uti,
 \quad
 T\in\CoCone_{\E_{\Isub}}(\Zsub,\Zsub)\subseteq\Tti.
$$
Choose a representative
$$
  \widetilde\alpha_Z\colon Z\longrightarrow\Omega\Sigma Z
$$
of $\alpha_Z$. Then $\omega T=\Omega W=\Omega\Sigma Z$, and the coreflection triangle \eqref{eq:omega-coreflection} is
\begin{equation}\label{eq:alpha-coreflection}
  V_T\longrightarrow\Omega\Sigma Z\xrightarrow{h_T}T\dashrightarrow,
~~~\mbox{where}~ V_T\in\Vsub.
\end{equation}

Since $\Zsub$ is full, the isomorphism $\alpha_Z$ in $\ul{\Zsub}$ is also an isomorphism between the same objects in $\C/[\Isub]$. By Lemma~\ref{lem:stable-lift}, we can therefore choose $J_0,J_1\in\Isub$ and an isomorphism
\begin{equation*}
  A=
  \begin{pmatrix}
    \widetilde\alpha_Z & a\\ b & c
  \end{pmatrix}:
  Z\oplus J_0\xrightarrow{\sim}\Omega\Sigma Z\oplus J_1.
\end{equation*}
Set
$$
  Y=Z\oplus J_0,
  \quad V'=V_T\oplus J_1.
$$
Since $\Isub\subseteq\Vsub$, we have $V'\in\Vsub$. After adding a split triangle to \eqref{eq:alpha-coreflection} and transporting it along $A^{-1}$, we obtain an $\s_{\Isub}$-triangle
\begin{equation}\label{eq:alpha-first-input}
  V'\longrightarrow Y\xrightarrow{r}T\dashrightarrow,
\end{equation}
where
\begin{equation*}
  r=\begin{pmatrix}h_T&0\end{pmatrix}A.
\end{equation*}
Thus the restriction of $r$ to the $Z$-summand of $Y$ is $h_T\widetilde\alpha_Z$.

After taking the direct sum of the fixed cosyzygy triangle \eqref{eq:cosyzygy} with the split triangle on $J_0$, we obtain an $\s_{\Isub}$-triangle
\begin{equation}\label{eq:alpha-second-input}
  Y\longrightarrow I''\longrightarrow U
  \xdash{\widetilde\lambda^Z},
  \qquad I''=I^Z\oplus J_0\in\Isub,
\end{equation}
where $\widetilde\lambda^Z$ has $\lambda^Z$ as its $Z$-component and zero on $J_0$.

Apply $(\mathrm{ET4})$ in $(\C,\E_{\Isub},\s_{\Isub})$ to \eqref{eq:alpha-first-input} and \eqref{eq:alpha-second-input}. We obtain an object $F_Z$ and $\s_{\Isub}$-triangles
\begin{equation}\label{eq:alpha-output-one}
  V'\longrightarrow I''\longrightarrow F_Z\dashrightarrow
\end{equation}
and
\begin{equation}\label{eq:alpha-output-two}
  T\longrightarrow F_Z\longrightarrow U
  \xdash{\eta_Z}.
\end{equation}
We display the four conflations in the following $(\mathrm{ET4})$ diagram:
$$
\begin{CD}
V' @>>> Y @>{r}>> T\\
@V{1_{V'}}VV @VVV @VVV\\
V' @>>> I'' @>>> F_Z\\
@. @VVV @VVV\\
@. U @= U
\end{CD}
$$
This is the standard $(\mathrm{ET4})$ diagram of Nakaoka--Palu \cite[Definition~2.12]{NakaokaPalu}. The top row is \eqref{eq:alpha-first-input}. The middle column is \eqref{eq:alpha-second-input}, and the remaining row and column are \eqref{eq:alpha-output-one} and \eqref{eq:alpha-output-two}.

The extension compatibility in $(\mathrm{ET4})$ identifies the extension in \eqref{eq:alpha-output-two} with the pushforward of $\widetilde\lambda^Z$ along $r$. Hence
\begin{equation*}
  \eta_Z=r_{*}\widetilde\lambda^Z
  =(h_T\widetilde\alpha_Z)_{*}\lambda^Z
  =h_T\widetilde\alpha_Z\lambda^Z.
\end{equation*}
By Lemma~\ref{lem:roundtrip}(ii),
\begin{equation}\label{eq:eta-alpha-identify}
  \eta_Z=-\lambda_W h^U,
  \quad h^U=h^{Z\langle1\rangle}.
\end{equation}
The extension $\lambda_W h^U$ is exactly the extension defining $U^{-}$ in Iitsuka's construction, because $\sigma U=\Sigma Z=W$, and it belongs to $\E^{\Isub}_{\Isub}(U,T)$. Thus \eqref{eq:eta-alpha-identify} places $\eta_Z$ in the intersection relative structure. Let
$$
  T\longrightarrow U^-\xrightarrow{s_U}U\xdash{\lambda_Wh^U}
$$
be the triangle defining $U^-$. By additivity of $\s$, the extension $-\lambda_Wh^U$ is realized by the same middle term after changing the sign of the first morphism. Therefore \eqref{eq:alpha-output-two} has middle term isomorphic to $U^-$. Hence
\begin{equation}\label{eq:F-is-U-minus}
  F_Z\cong U^{-}=Z\langle1\rangle^{-}.
\end{equation}

Finally, \eqref{eq:alpha-output-one} has first term $V'\in\Vsub$ and middle term $I''\in\Isub$. Thus
$$
  F_Z\in\Cone_{\E_{\Isub}}(\Vsub,\Isub)=\Tti^{+}.
$$
Using \eqref{eq:F-is-U-minus}, we conclude that
$$
  Z\langle1\rangle^{-}\in\Tti^{+}.
$$
If $U'\in\Zsub\langle1\rangle$, then $\ul{U'}$ is isomorphic to the image
of a chosen object $Z\langle1\rangle$. Applying $(\,\cdot\,)^{-}$ shows
$\ul{(U')^{-}}\cong\ul{Z\langle1\rangle^{-}}$. The object on the right is
the image of an object of $\Tti^{+}$. Hence the claimed inclusion of
essential images holds in $\C/[\Isub]$.
\end{proof}

\subsection{Proof of the characterization theorem}

\begin{theorem}\label{thm:main}
Let $(\Ssub,\Zsub,\Vsub)$ be a premutation triple and let
$$
  (\underlineZ,\Sigma,\Omega,\nabla,\Delta),
  \quad \underlineZ=\Zsub/[\Isub],
$$
be the canonical pretriangulated category constructed in \cite[Theorem~3.37]{Iitsuka}. Then the following are equivalent.
\begin{enumerate}
\item $(\underlineZ,\Sigma,\Omega,\nabla,\Delta)$ is triangulated.
\item $(\Ssub,\Zsub,\Vsub)$ satisfies $(\mathrm{MT4})'$.
\item $(\Ssub,\Zsub,\Vsub)$ satisfies $(\mathrm{MT4})$.
\end{enumerate}
\end{theorem}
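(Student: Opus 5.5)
The plan is to reduce the theorem to the two converse lemmas already proved, so the only new work is the implication $(1)\Rightarrow(2)$. The implication $(2)\Rightarrow(1)$ is \cite[Theorem~4.8]{Iitsuka}, and $(2)\Leftrightarrow(3)$ is \cite[Remark~4.9(1)]{Iitsuka}. Both are taken as given and are not reproved.

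For $(1)\Rightarrow(2)$, assume that $(\underlineZ,\Sigma,\Omega,\nabla,\Delta)$ is triangulated. By the convention fixed in Section~\ref{sec:prelim}, the suspension $\Sigma$ is then an autoequivalence of $\underlineZ$, and its right adjoint $\Omega$ is a quasi-inverse. I would justify the passage to the unit and counit carefully, since a pretriangulated structure being triangulated in the sense of Beligiannis--Reiten identifies the left and right triangulations. That identification is what makes $\Omega$ quasi-inverse to $\Sigma$.

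Once $\Sigma$ is an equivalence, a standard adjunction fact applies. For an adjoint pair $(\Sigma,\Omega)$ from Theorem~\ref{thm:iitsuka-pretri} in which the left adjoint is an equivalence, the unit $\alpha\colon\Id\to\Omega\Sigma$ and the counit $\beta\colon\Sigma\Omega\to\Id$ are natural isomorphisms. Concretely, fullness and faithfulness of $\Sigma$ make $\alpha$ invertible, and essential surjectivity together with fullness and faithfulness makes $\beta$ invertible. So every component $\alpha_Z$ and $\beta_Z$ with $Z\in\Zsub$ is an isomorphism in $\ul{\Zsub}$.

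Next apply Lemma~\ref{lem:counit-half} for each $Z\in\Zsub$, which gives $\ul{\Zsub\langle-1\rangle}^{+}\subseteq\ul{\Uti}^{-}$. Then apply Lemma~\ref{lem:unit-half}, which gives $\ul{\Zsub\langle1\rangle}^{-}\subseteq\ul{\Tti}^{+}$. These are exactly the two inclusions in \eqref{eq:mt4prime}, so $(\mathrm{MT4})'$ holds. Combining this with Iitsuka's results closes the cycle of equivalences, including the displayed reformulation $\ul{\Zsub\langle1\rangle}^{-}=\ul{\Zsub\langle-1\rangle}^{+}$.

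The real content lies in the two lemmas, which are already established. The remaining step most likely to need care is therefore the first one: checking that "triangulated" for this fixed pretriangulated structure really forces $\Sigma$ to be an equivalence with $\Omega$ as its adjoint quasi-inverse, rather than merely $\Sigma$ being an equivalence for some other structure. I would handle it by citing \cite[II.1]{BeligiannisReiten}, where a pretriangulated category is triangulated precisely when $\Sigma$ is an equivalence and $\nabla$ and $\Delta$ coincide up to the adjunction isomorphism. Uniqueness of adjoints then identifies $\Omega$ with the quasi-inverse of $\Sigma$.
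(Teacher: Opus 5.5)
Your proposal is correct and matches the paper's proof: you cite Iitsuka for $(2)\Rightarrow(1)$ and $(2)\Leftrightarrow(3)$, and for $(1)\Rightarrow(2)$ you use that $\Sigma$ is an autoequivalence. Its right adjoint $\Omega$ is then a quasi-inverse, so every component of the unit $\alpha$ and the counit $\beta$ is invertible. Lemmas~\ref{lem:counit-half} and~\ref{lem:unit-half} then give the two inclusions of $(\mathrm{MT4})'$, exactly as in the paper.
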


\begin{proof}
Iitsuka proves $(2)\Leftrightarrow(3)$ in \cite[Remark~4.9(1)]{Iitsuka} and $(2)\Rightarrow(1)$ in \cite[Theorem~4.8]{Iitsuka}. We prove $(1)\Rightarrow(2)$.

Assume that the canonical pretriangulated structure is triangulated. Then its suspension functor $\Sigma$ is an autoequivalence. By Theorem~\ref{thm:iitsuka-pretri}, $\Sigma$ is left adjoint to $\Omega$. Standard adjunction theory shows that a right adjoint of an equivalence is a quasi-inverse, see Mac~Lane \cite[Chapter~IV, Sections~1 and~4]{MacLane}. Hence $(\Sigma,\Omega)$ is an adjoint equivalence. In particular, every component
$$
  \alpha_Z\colon Z\longrightarrow\Omega\Sigma Z,
  \quad
  \beta_Z\colon \Sigma\Omega Z\longrightarrow Z
$$
is an isomorphism in $\underlineZ$.

Lemmas~\ref{lem:counit-half} and~\ref{lem:unit-half} directly give the
following inclusions in $\C/[\Isub]$.
$$
 \ul{\Zsub\langle1\rangle}^{-}\subseteq\ul{\Tti}^{+},
 \quad
 \ul{\Zsub\langle-1\rangle}^{+}\subseteq\ul{\Uti}^{-}.
$$
We have obtained the two clauses of $(\mathrm{MT4})'$, and hence $(1)\Rightarrow(2)$.
\end{proof}

\begin{remark}\label{rem:one-sided}
We can also apply the two lemmas separately, without assuming that the adjunction is an equivalence. Namely,
$$
  \beta_Z\text{ invertible for all }Z
  \quad\Longrightarrow\quad
  \ul{\Zsub\langle-1\rangle}^{+}\subseteq\ul{\Uti}^{-},
$$
whereas
$$
  \alpha_Z\text{ invertible for all }Z
  \quad\Longrightarrow\quad
  \ul{\Zsub\langle1\rangle}^{-}\subseteq\ul{\Tti}^{+}.
$$
Thus the two clauses of $(\mathrm{MT4})'$ are necessary for the unit and the
counit of the mutation adjunction, respectively, to be invertible.
\end{remark}

\section{Consequences and special cases}\label{sec:applications}

In this section, we use the characterization to give a functorial reformulation of $(\mathrm{MT4})$, to sharpen the comparison with concentric twin cotorsion pairs, and to isolate the remaining obstruction for reducible triples.

\subsection{Functorial criteria}

\begin{corollary}\label{cor:functorial-characterization}
Let $(\Ssub,\Zsub,\Vsub)$ be a premutation triple. The following conditions are equivalent.
\begin{enumerate}
\item $(\Ssub,\Zsub,\Vsub)$ satisfies $(\mathrm{MT4})$.
\item The canonical pretriangulated structure on $\Zsub/[\Isub]$ is triangulated.
\item The mutation functor $\Sigma$ is an autoequivalence.
\item The mutation functor $\Omega$ is an autoequivalence.
\item The adjoint pair $(\Sigma,\Omega)$ is an adjoint equivalence.
\item Both the unit $\alpha$ and the counit $\beta$ are natural isomorphisms.
\end{enumerate}
\end{corollary}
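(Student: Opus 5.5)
The plan is to arrange the six conditions in a cycle, relying on Theorem~\ref{thm:main} for the hard part. Theorem~\ref{thm:main} already gives $(1)\Leftrightarrow(2)$, since $(\mathrm{MT4})$ is condition (3) there. So it is enough to show that (3), (4), (5) and (6) are each equivalent to (2). We will prove
$$(2)\Rightarrow(3)\Rightarrow(5)\Leftrightarrow(6)\Rightarrow(1),\qquad (4)\Rightarrow(5)\Rightarrow(4).$$

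For $(2)\Rightarrow(3)$: if the canonical pretriangulated structure is triangulated, its suspension $\Sigma$ is an equivalence, as recorded in Section~\ref{sec:prelim}. For $(3)\Rightarrow(5)$ and $(4)\Rightarrow(5)$ we use Theorem~\ref{thm:iitsuka-pretri}, which says $\Sigma\dashv\Omega$, together with standard adjunction theory (Mac~Lane, as cited in the proof of Theorem~\ref{thm:main}). If either functor in an adjoint pair is an equivalence, then the other is its quasi-inverse and both the unit and the counit are isomorphisms. One way to see this: if $\Sigma$ is an equivalence with quasi-inverse $\Sigma^{-1}$, then $\Sigma^{-1}$ is also a right adjoint of $\Sigma$, so uniqueness of adjoints gives $\Omega\cong\Sigma^{-1}$. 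The argument for $\Omega$ is dual. The implication $(5)\Rightarrow(4)$ holds because both functors in an adjoint equivalence are equivalences. The equivalence $(5)\Leftrightarrow(6)$ is just the definition of an adjoint equivalence.

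The substantive step is $(6)\Rightarrow(1)$, and here we do not need triangulatedness as an input. If $\beta$ is invertible, Lemma~\ref{lem:counit-half} gives $\ul{\Zsub\langle-1\rangle}^{+}\subseteq\ul{\Uti}^{-}$. If $\alpha$ is invertible, Lemma~\ref{lem:unit-half} gives $\ul{\Zsub\langle1\rangle}^{-}\subseteq\ul{\Tti}^{+}$. This is exactly the remark in Remark~\ref{rem:one-sided}. Together these two inclusions are $(\mathrm{MT4})'$, and Iitsuka's Remark~4.9(1) turns $(\mathrm{MT4})'$ into $(\mathrm{MT4})$, which is (1). This closes the cycle.

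The main obstacle, as far as there is one, is the adjunction bookkeeping in $(3)\Rightarrow(5)$ and $(4)\Rightarrow(5)$. We have to be sure that the specific unit $\alpha$ and counit $\beta$ fixed in \cite[Notation~3.17]{Iitsuka} are invertible, not merely that some abstract isomorphism $\Omega\Sigma\cong\Id$ exists. We will handle this directly. If $\Sigma$ is fully faithful and essentially surjective, then the composite
$$\Hom(X,Y)\to\Hom(\Sigma X,\Sigma Y)\cong\Hom(X,\Omega\Sigma Y)$$
is a bijection and equals postcomposition with $\alpha_Y$, so Yoneda makes $\alpha_Y$ an isomorphism. Invertibility of $\beta$ then follows from the triangle identities together with essential surjectivity of $\Sigma$. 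The case where $\Omega$ is the equivalence is dual.
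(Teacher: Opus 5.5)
Your proposal is correct. It differs from the paper's proof only in how the cycle is closed.

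The paper argues as follows:
\begin{itemize}
\item $(2)\Rightarrow(3)$ and $(2)\Rightarrow(4)$ come from the definition of a triangulated category.
\item $(3)\Rightarrow(5)$, $(4)\Rightarrow(5)$ and $(5)\Leftrightarrow(6)$ come from standard adjunction theory.
\item It returns via $(5)\Rightarrow(2)$, citing Iitsuka's Corollary~3.38: if $\Sigma$ and $\Omega$ are mutually quasi-inverse, the canonical pretriangulated category is triangulated.
\end{itemize}
So in the paper, Theorem~\ref{thm:main} is used only to attach condition (1) to an otherwise formal equivalence of (2)--(6).

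You instead close the cycle with $(6)\Rightarrow(1)$. You apply Lemmas~\ref{lem:counit-half} and~\ref{lem:unit-half} directly to get $(\mathrm{MT4})'$, then use Iitsuka's Remark~4.9(1), exactly as in Remark~\ref{rem:one-sided}.

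What each route buys:
\begin{itemize}
\item The paper's route shows that the equivalence of (2)--(6) is purely a fact about the pretriangulated structure and its adjunction. It does not depend on the relative-extension machinery, and the converse lemmas enter only once, inside Theorem~\ref{thm:main}.
\item Your route avoids Corollary~3.38 and makes explicit that invertibility of $\alpha$ and $\beta$ alone already forces $(\mathrm{MT4})$. The cost is that getting from (6) back to triangulatedness now goes through $(\mathrm{MT4})$ and the sufficiency direction of Theorem~\ref{thm:main}, which is Iitsuka's Theorem~4.8, rather than directly.
\end{itemize}

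Your Yoneda argument is a genuine improvement in care over the paper's bare citation of Mac~Lane. It shows that the specific unit $\alpha$ fixed in Iitsuka's Notation~3.17 is invertible, and the triangle identity with essential surjectivity then gives the same for $\beta$.
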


\begin{proof}
By Theorem~\ref{thm:main}, we have $(1)\Leftrightarrow(2)$. From the definition of a triangulated category, we obtain $(2)\Rightarrow(3)$ and $(2)\Rightarrow(4)$. If (3) holds, the right adjoint $\Omega$ of the equivalence $\Sigma$ is a quasi-inverse, so (5) holds; the implication $(4)\Rightarrow(5)$ is dual. Standard properties of adjunctions give $(5)\Leftrightarrow(6)$, see Mac~Lane \cite[Chapter~IV, Sections~1 and~4]{MacLane}. Finally, under (5), the functors $\Sigma$ and $\Omega$ are mutually quasi-inverse, so Iitsuka \cite[Corollary~3.38]{Iitsuka} gives (2).
\end{proof}

We can also formulate the two converse lemmas in terms of full faithfulness.

\begin{corollary}\label{cor:full-faithfulness}
Let $(\Ssub,\Zsub,\Vsub)$ be a premutation triple.
\begin{enumerate}
\item If $\Sigma$ is fully faithful, then the following inclusion holds in $\C/[\Isub]$.
$$
  \ul{\Zsub\langle1\rangle}^{-}\subseteq\ul{\Tti}^{+}.
$$

\item If $\Omega$ is fully faithful, then the following inclusion holds in $\C/[\Isub]$.
$$
  \ul{\Zsub\langle-1\rangle}^{+}\subseteq\ul{\Uti}^{-}.
$$
\item If both $\Sigma$ and $\Omega$ are fully faithful, then $(\mathrm{MT4})$ holds.
\end{enumerate}
\end{corollary}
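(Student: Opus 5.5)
The plan is to reduce each part to the invertibility of the unit or counit, and then invoke Lemma~\ref{lem:unit-half} or Lemma~\ref{lem:counit-half} (equivalently, Remark~\ref{rem:one-sided}).

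For (1), I use the standard fact about adjunctions (Mac~Lane, Chapter~IV, Section~3): if $\Sigma\dashv\Omega$, then the left adjoint $\Sigma$ is fully faithful if and only if the unit $\alpha\colon\Id\to\Omega\Sigma$ is a natural isomorphism. Concretely, the composite
$$\Hom_{\underlineZ}(X,Y)\xrightarrow{\Sigma}\Hom_{\underlineZ}(\Sigma X,\Sigma Y)\cong\Hom_{\underlineZ}(X,\Omega\Sigma Y)$$
is postcomposition with $\alpha_Y$. So full faithfulness of $\Sigma$ makes $(\alpha_Y)_*$ bijective for every $X$, and Yoneda then shows that $\alpha_Y$ is invertible in $\underlineZ$. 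With every $\alpha_Z$ invertible, Lemma~\ref{lem:unit-half} gives $Z\langle1\rangle^{-}\in\Tti^{+}$ for each $Z\in\Zsub$. The essential-image argument at the end of that proof then yields $\ul{\Zsub\langle1\rangle}^{-}\subseteq\ul{\Tti}^{+}$.

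Part (2) is dual. The right adjoint $\Omega$ is fully faithful if and only if the counit $\beta\colon\Sigma\Omega\to\Id$ is a natural isomorphism, via
$$\Hom(\Omega X,\Omega Y)\cong\Hom(\Sigma\Omega X,Y),$$
which corresponds to precomposition with $\beta_X$. Lemma~\ref{lem:counit-half} then gives $\ul{\Zsub\langle-1\rangle}^{+}\subseteq\ul{\Uti}^{-}$.

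For (3), parts (1) and (2) together give both inclusions of \eqref{eq:mt4prime}, which is exactly $(\mathrm{MT4})'$. By \cite[Remark~4.9(1)]{Iitsuka} this is equivalent to $(\mathrm{MT4})$. Alternatively, both $\alpha$ and $\beta$ are isomorphisms, so condition (6) of Corollary~\ref{cor:functorial-characterization} holds, and that corollary gives $(\mathrm{MT4})$.

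The only step needing care is the adjunction fact. I must check that the isomorphism in Theorem~\ref{thm:iitsuka-pretri} has unit $\alpha$ and counit $\beta$ in Iitsuka's normalisation \cite[Notation~3.17]{Iitsuka}, so that the Yoneda argument produces exactly the components used in the two lemmas. Since the lemmas require invertibility only in $\underlineZ$, no ambient lifting issue arises here; that is handled inside the lemmas through Lemma~\ref{lem:stable-lift}.
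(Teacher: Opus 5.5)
Your proposal is correct and follows the paper's proof. You identify full faithfulness of $\Sigma$ (resp.\ $\Omega$) with invertibility of the unit $\alpha$ (resp.\ counit $\beta$) and then apply Lemma~\ref{lem:unit-half} (resp.\ Lemma~\ref{lem:counit-half}); for (3) the paper uses your alternative route through Corollary~\ref{cor:functorial-characterization}, while your primary route via $(\mathrm{MT4})'$ and \cite[Remark~4.9(1)]{Iitsuka} is equally valid and slightly more direct.
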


\begin{proof}
For the adjunction between $\Sigma$ and $\Omega$, standard adjunction theory identifies full faithfulness of $\Sigma$ with invertibility of the unit,  see Mac~Lane \cite[Chapter~IV, Section~3]{MacLane}. We obtain (1) from Lemma~\ref{lem:unit-half}. Dually, full faithfulness of $\Omega$ is equivalent to invertibility of the counit, so Lemma~\ref{lem:counit-half} gives (2). If both functors are fully faithful, both the unit and the counit are invertible. Hence the adjunction is an equivalence, and Corollary~\ref{cor:functorial-characterization} gives (3).
\end{proof}

\subsection{Concentric twin cotorsion pairs}

We now specialize to concentric twin cotorsion pairs. As Iitsuka specifies in the opening paragraph of \cite[Appendix~A]{Iitsuka}, the subcategories in this subsection are understood to be closed under direct summands.

\begin{definition}\label{def:concentric-tcp}
Following Liu and Nakaoka \cite[Definitions~2.1 and~2.3]{LiuNakaoka}, a pair $(\mathcal U,\mathcal V)$ is a cotorsion pair if
$$
  \E(\mathcal U,\mathcal V)=0,\quad
  \C=\Cone(\mathcal V,\mathcal U)
  =\CoCone(\mathcal V,\mathcal U).
$$
A pair of cotorsion pairs $((\Ssub,\mathcal T),(\mathcal U,\Vsub))$ is a \emph{twin cotorsion pair} if $\Ssub\subseteq\mathcal U$, and it is \emph{concentric} if
$$
  \Ssub\cap\mathcal T=\mathcal U\cap\Vsub.
$$
Iitsuka recalls the same definitions in \cite[Definitions~A.2 and~A.4]{Iitsuka}, see also Nakaoka \cite[Definition~3.3]{Nakaoka} and Zheng--Wei \cite[Section~2]{ZhengWei}.
\end{definition}

Let $\C$ be triangulated, let $((\Ssub,\mathcal T),(\mathcal U,\Vsub))$ be a concentric twin cotorsion pair, and put
$$
  \Zsub=\mathcal T\cap\mathcal U.
$$
By \cite[Example~3.13(5)]{Iitsuka}, the associated triple $(\Ssub,\Zsub,\Vsub)$ is a premutation triple. Its ideal subcategory is the common intersection in the concentric condition. Since $\Ssub\subseteq\mathcal U$ and $\Vsub\subseteq\mathcal T$,
$$
  \Ssub\cap\Zsub=\Ssub\cap\mathcal T
  =\mathcal U\cap\Vsub=\Zsub\cap\Vsub=\Isub.
$$
Iitsuka proves in \cite[Example~3.15(4)]{Iitsuka} that the mutation functors agree with those for concentric twin cotorsion pairs. For completeness, Iitsuka \cite[Definition~A.19 and Lemma~A.20]{Iitsuka} also proves that an $\s$-triangle $A\to B\to C\dashrightarrow$ with $A,B\in\Zsub$ has $C\in\mathcal U$ exactly when its first morphism is $\Isub$-monic; dually, an $\s$-triangle $A\to B\to C\dashrightarrow$ with $B,C\in\Zsub$ has $A\in\mathcal T$ exactly when its second morphism is $\Isub$-epic. Iitsuka uses these triangles in \cite[Definition~A.21]{Iitsuka}. Thus the pretriangulated structure in \cite[Theorem~A.22]{Iitsuka} is the specialization of the canonical structure in \cite[Theorem~3.37]{Iitsuka} to concentric twin cotorsion pairs.

\begin{definition}\label{def:tcp-inclusions}
Nakaoka's construction uses the following two subcategories:
$$
  \mathcal N^{\mathrm i}=\Cone(\Vsub,\Ssub),
  \quad
  \mathcal N^{\mathrm f}=\CoCone(\Vsub,\Ssub).
$$
Thus, $\mathcal N^{\mathrm i}$ consists of the objects which occur as third
terms of triangles whose first term lies in $\Vsub$ and whose middle term lies
in $\Ssub$; $\mathcal N^{\mathrm f}$ consists of the objects which occur as
first terms of triangles whose middle term lies in $\Vsub$ and whose third term
lies in $\Ssub$. We will use the following two quotient inclusions from Iitsuka
\cite[Condition~A.27]{Iitsuka}:
$$
  \ul{\Zsub\langle1\rangle}^{-}\subseteq\ul{\mathcal N^{\mathrm i}},
  \quad
  \ul{\Zsub\langle-1\rangle}^{+}\subseteq\ul{\mathcal N^{\mathrm f}}.
$$
Iitsuka records these inclusions immediately after observing that Nakaoka's
proof works under precisely these two weaker hypotheses, see \cite[the
paragraph preceding Condition~A.27]{Iitsuka}.
\end{definition}

Iitsuka observes that this is the condition denoted by $(\mathrm{MT4}^{-})$ for the associated premutation triple \cite[Remark~4.9(3)]{Iitsuka}. In general, $(\mathrm{MT4})$ implies $(\mathrm{MT4}^{-})$, and the converse is not part of the general premutation theory. For concentric twin cotorsion pairs the converse follows from the main theorem.

We first record the triangulatedness statement under these two inclusions. The proof below makes explicit why Condition~A.27, rather than the stronger assumptions in \cite[Theorem~A.26]{Iitsuka}, is sufficient.

\begin{lemma}\label{lem:tcp-inclusions-triangulated}
Let $((\Ssub,\mathcal T),(\mathcal U,\Vsub))$ be a concentric twin cotorsion pair in a triangulated category. If the two inclusions in Definition~\ref{def:tcp-inclusions} hold, then the canonical pretriangulated category
$$
 (\ul{\Zsub},\Sigma,\Omega,\nabla,\Delta)
$$
is triangulated.
\end{lemma}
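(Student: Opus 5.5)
The plan is to reduce the statement to Iitsuka's sufficiency argument and show that it only uses the two inclusions of Definition~\ref{def:tcp-inclusions}, through the identifications available in the concentric twin cotorsion setting. First, I would compare $\mathcal N^{\mathrm i}$ and $\mathcal N^{\mathrm f}$ with $\Tti^{+}$ and $\Uti^{-}$. By definition, $\Tti^{+}=\Cone_{\E_{\Isub}}(\Vsub,\Isub)$ and $\Uti^{-}=\CoCone_{\E^{\Isub}}(\Isub,\Ssub)$, whereas $\mathcal N^{\mathrm i}=\Cone(\Vsub,\Ssub)$ and $\mathcal N^{\mathrm f}=\CoCone(\Vsub,\Ssub)$. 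The first task is to decide whether the image of $\mathcal N^{\mathrm i}$ in $\C/[\Isub]$ lies inside $\ul{\Tti}^{+}$, and the image of $\mathcal N^{\mathrm f}$ inside $\ul{\Uti}^{-}$. If so, Condition~A.27 gives $(\mathrm{MT4})'$ directly, and Iitsuka's Theorem~4.8 finishes the proof.

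In general this comparison need not hold, because the middle term of a triangle defining an object of $\mathcal N^{\mathrm i}$ lies in $\Ssub$ rather than $\Isub$. I therefore expect to have to go through the triangulatedness argument itself, and I would aim to prove that $\Sigma$ is an autoequivalence. By Corollary~\ref{cor:functorial-characterization}, (3)$\Rightarrow$(2), this is enough. Concretely, I would show that the unit $\alpha$ and the counit $\beta$ are isomorphisms. Reversing the diagrams of Lemmas~\ref{lem:counit-half} and~\ref{lem:unit-half} suggests the approach: take $U=Z\langle1\rangle$ and consider the triangle $T\to U^{-}\to U\xdash{\lambda_{\Sigma Z}h^U}$ with $T=(\Sigma Z)\langle-1\rangle$. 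The inclusion $\ul{U^{-}}\in\ul{\mathcal N^{\mathrm i}}$ gives a triangle $V\to S\to U^{-}\to V[1]$ with $V\in\Vsub$ and $S\in\Ssub$.

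Next, I would apply the octahedral axiom in the triangulated category $\C$ to the composite $S\to U^{-}\to U$. Using $\E(\Ssub,\mathcal T)=0$, the concentric identity $\Ssub\cap\mathcal T=\Isub$, and the cotorsion pair $(\mathcal U,\Vsub)$, the aim is to replace $S$ by an object of $\Isub$ up to a summand lying in $[\Isub]$. This would produce a triangle $V'\to Y\to T$ with $Y\in\Zsub$ together with a factorization showing that $h_T\widetilde\alpha_Z$ realizes the coreflection of $T$. Then $\omega T\cong Z$ in $\ul{\Zsub}$ via $\alpha_Z$, which is exactly the reverse of the computation in Lemma~\ref{lem:unit-half}, using the extension identity of Lemma~\ref{lem:roundtrip}(2). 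The dual argument with $\mathcal N^{\mathrm f}$ gives invertibility of $\beta$.

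I expect the main obstacle to be the middle step: converting the $\Ssub$-middle term into an $\Isub$-middle term. Here the full twin cotorsion structure is needed, not only the premutation axioms. My first attempt would be to show that every $S\in\Ssub$ occurring in such a triangle is, modulo $[\Isub]$, isomorphic to an object of $\Zsub\cap\Ssub$. For this I would use Iitsuka's Lemma~A.20 to check that the relevant morphisms are $\Isub$-monic or $\Isub$-epic, and the orthogonality $\E(\Ssub,\mathcal T)=0$ to show that the connecting map $U\to S[1]$ vanishes after composing with $h_T$. If that attempt fails, I would instead follow Nakaoka's original proof of triangulatedness line by line and record that only the two quotient inclusions are invoked.
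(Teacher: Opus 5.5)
\textbf{There is a genuine gap.} Your overall target is sound. Proving that $\Sigma$ is an autoequivalence and then invoking Corollary~\ref{cor:functorial-characterization}, i.e.\ Iitsuka's Corollary~3.38, is legitimate and not circular. The trouble is the step you flag as the main obstacle, and it carries the entire content of the lemma.

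Your tactic is to show that $\ul{S}$ is isomorphic to an object of $\Zsub\cap\Ssub=\Isub$. That means $\ul{S}=0$, so $S\in\Isub$, since subcategories are summand-closed in this subsection. Now any triangle $V\to I\to U^{-}\to V[1]$ with $V\in\Vsub$ and $I\in\Isub$ automatically lies in $\E_{\Isub}$, because $\E(\Isub,\Vsub)=0$ (as $\Isub\subseteq\mathcal U$). So that step, with its dual, would give $Z\langle1\rangle^{-}\in\Tti^{+}$ outright. This is already $(\mathrm{MT4})'$, and Iitsuka's Theorem~4.8 would finish without any reversed diagram. In other words, your middle step is a direct proof of $(\mathrm{MT4}^{-})\Rightarrow(\mathrm{MT4})'$. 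The paper never proves this directly; it obtains it only a posteriori in Theorem~\ref{thm:tcp-application}, via this very lemma and Theorem~\ref{thm:main}.

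The tools you list do not supply this step. From $V\to S\to U^{-}$ and $\E(\Ssub,\Vsub)=0$ (as $\Vsub\subseteq\mathcal T$) you only get an injection $\E(\Ssub,S)\hookrightarrow\E(\Ssub,U^{-})$. To conclude $S\in\Ssub\cap\mathcal T=\Isub$ you would need $U^{-}\in\mathcal T$, and nothing in your setup provides that. Your remark about the connecting map $U\to S[1]$ ``vanishing after composing with $h_T$'' is not precise enough to check.

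What does work is your fallback, and it is essentially the paper's proof. As written, though, it omits the two specific inputs that make it an argument.
\begin{enumerate}
\item Iitsuka notes, in the paragraph preceding his Condition~A.27, that the proof of his Theorem~A.26(1) uses only $(\mathrm{Nak1})$ and $(\mathrm{Nak2})$. That proof rests on Nakaoka's Remark~5.8, which gives natural isomorphisms $\Sigma\Omega\cong\Id_{\ul{\Zsub}}$ and $\Omega\Sigma\cong\Id_{\ul{\Zsub}}$.
\item Iitsuka's Example~3.15(4) and Theorem~A.22 identify Nakaoka's mutation functors and right and left triangles with the canonical $(\Sigma,\Omega,\nabla,\Delta)$ of the associated premutation triple.
\end{enumerate}
The identification in the second item is essential. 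Triangulatedness of Nakaoka's quotient alone says nothing about the canonical structure; compare the caveat at the start of Section~\ref{sec:relations}. With both inputs, $\Sigma$ and $\Omega$ are mutually quasi-inverse, and Iitsuka's Corollary~3.38 gives the conclusion. Without them, ``follow Nakaoka's proof line by line'' is a promise rather than a proof.
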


\begin{proof}
Iitsuka observes in the paragraph preceding
\cite[Condition~A.27]{Iitsuka} that the proof of
\cite[Theorem~A.26(1)]{Iitsuka} remains valid under the two weaker inclusions
in Definition~\ref{def:tcp-inclusions}. These inclusions are precisely
Conditions $(\mathrm{Nak1})$ and $(\mathrm{Nak2})$ of
\cite[Condition~A.27]{Iitsuka}. The argument is based on
\cite[Remark~5.8]{Nakaoka}, which gives natural isomorphisms
$$
  \Sigma\Omega\cong\Id_{\ul{\Zsub}},
  \qquad
  \Omega\Sigma\cong\Id_{\ul{\Zsub}}.
$$
Hence $\Sigma$ and $\Omega$ are mutually quasi-inverse. By
\cite[Example~3.15(4) and Theorem~A.22]{Iitsuka}, the mutation functors and
the right and left triangles in this construction agree with the canonical
structure associated with the premutation triple
$(\Ssub,\Zsub,\Vsub)$. Therefore the canonical pretriangulated category is
triangulated.
\end{proof}

\begin{theorem}\label{thm:tcp-application}
Let $((\Ssub,\mathcal T),(\mathcal U,\Vsub))$ be a concentric twin cotorsion pair in a triangulated category and let $(\Ssub,\Zsub,\Vsub)$ be its associated premutation triple. The following conditions are equivalent.
\begin{enumerate}
\item The following two inclusions hold:
$$
  \ul{\Zsub\langle1\rangle}^{-}\subseteq\ul{\mathcal N^{\mathrm i}},
  \quad
  \ul{\Zsub\langle-1\rangle}^{+}\subseteq\ul{\mathcal N^{\mathrm f}}.
$$
\item The associated premutation triple satisfies $(\mathrm{MT4}^{-})$.
\item The associated premutation triple satisfies $(\mathrm{MT4})$.
\item The canonical pretriangulated structure on $\Zsub/[\Isub]$ is triangulated.
\end{enumerate}
\end{theorem}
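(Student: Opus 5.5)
The proof will be a cycle of implications $(1)\Leftrightarrow(2)\Rightarrow(4)\Rightarrow(3)\Rightarrow(2)$, chosen so that the main theorem supplies the hardest step.

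For $(1)\Leftrightarrow(2)$ we cite Iitsuka \cite[Remark~4.9(3)]{Iitsuka}, which identifies the two inclusions of Definition~\ref{def:tcp-inclusions} with $(\mathrm{MT4}^{-})$ for the associated premutation triple. The only thing to check is that this identification uses the same objects as ours. Under the concentric assumption in a triangulated category, the subcategories $\mathcal N^{\mathrm i}=\Cone(\Vsub,\Ssub)$ and $\mathcal N^{\mathrm f}=\CoCone(\Vsub,\Ssub)$ are the cone and cocone subcategories appearing in $(\mathrm{MT4}^{-})$. The fixed objects $Z\langle\pm1\rangle^{\mp}$ are those of Definition~\ref{def:corrections}, because the mutation functors agree by \cite[Example~3.15(4)]{Iitsuka}. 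Taking essential images in $\C/[\Isub]$, as at the end of the proofs of Lemmas~\ref{lem:counit-half} and~\ref{lem:unit-half}, the quotient statements coincide.

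For $(1)\Rightarrow(4)$ we apply Lemma~\ref{lem:tcp-inclusions-triangulated} directly.

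For $(4)\Rightarrow(3)$ we apply Theorem~\ref{thm:main}. This is where all the real work sits. Triangulatedness makes $\Sigma$ an equivalence, so the unit and counit are invertible. Lemmas~\ref{lem:counit-half} and~\ref{lem:unit-half} then give $(\mathrm{MT4})'$, and \cite[Remark~4.9(1)]{Iitsuka} turns this into $(\mathrm{MT4})$. No separate argument is needed at this point.

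For $(3)\Rightarrow(2)$ we show that $(\mathrm{MT4})$ implies $(\mathrm{MT4}^{-})$. This is recorded in \cite[Remark~4.9(3)]{Iitsuka}, and I would also verify it directly. Using $\Isub\subseteq\Ssub$ and $\Isub\subseteq\Vsub$, we have
$$
\Uti^{-}=\CoCone_{\E^{\Isub}}(\Isub,\Ssub)\subseteq\CoCone(\Vsub,\Ssub)=\mathcal N^{\mathrm f},
\qquad
\Tti^{+}=\Cone_{\E_{\Isub}}(\Vsub,\Isub)\subseteq\Cone(\Vsub,\Ssub)=\mathcal N^{\mathrm i}.
$$
Combining these with the two inclusions of $(\mathrm{MT4})'$, which is equivalent to $(\mathrm{MT4})$ by \cite[Remark~4.9(1)]{Iitsuka}, gives the two inclusions in $(1)$, and hence $(2)$.

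The main obstacle is the bookkeeping in $(1)\Leftrightarrow(2)$ and in the last step, not any new homological argument. We must be sure that Nakaoka's subcategories, Iitsuka's $(\mathrm{MT4}^{-})$, and our underlined essential images $\ul{\mathcal X}^{\pm}$ all refer to the same objects up to isomorphism in $\C/[\Isub]$. We also need the specialization results \cite[Example~3.15(4), Theorem~A.22]{Iitsuka} to ensure that "triangulated" in $(4)$ means the canonical structure of Theorem~\ref{thm:iitsuka-pretri}. I would settle this first by spelling out the object-level inclusions above, since the cycle closes at the level of objects.
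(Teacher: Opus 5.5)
Your proposal is correct and follows essentially the same route as the paper. You cite Iitsuka for $(1)\Leftrightarrow(2)$, use Lemma~\ref{lem:tcp-inclusions-triangulated} for $(1)\Rightarrow(4)$ and Theorem~\ref{thm:main} for $(4)\Rightarrow(3)$. Your direct check that $\Uti^{-}\subseteq\mathcal N^{\mathrm f}$ and $\Tti^{+}\subseteq\mathcal N^{\mathrm i}$ (from $\Isub\subseteq\Vsub$ and $\Isub\subseteq\Ssub$), combined with $(\mathrm{MT4})'$, is a valid self-contained replacement for the paper's citation of \cite[Remark~4.9(3)]{Iitsuka} for $(3)\Rightarrow(2)$.
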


\begin{proof}
Iitsuka proves $(1)\Leftrightarrow(2)$ in \cite[Remark~4.9(3) and Condition~A.27]{Iitsuka} and records $(3)\Rightarrow(2)$ in \cite[Remark~4.9(3)]{Iitsuka}.

By Lemma~\ref{lem:tcp-inclusions-triangulated}, $(1)$ implies (4). By Theorem~\ref{thm:main}, (4) implies (3). Thus all the conditions are equivalent.
\end{proof}

Thus, for premutation triples arising from concentric twin cotorsion pairs, we remove the apparent gap between $(\mathrm{MT4})$ and $(\mathrm{MT4}^{-})$. We also recall the two special classes used below. For subcategories $\mathcal X,\mathcal Y$ of a triangulated category, we write $\mathcal X*\mathcal Y$ for the full subcategory of objects $M$ which occur in a distinguished triangle
$$
  X\longrightarrow M\longrightarrow Y\longrightarrow X[1],
  \quad\mbox{where}~ X\in\mathcal X,~ Y\in\mathcal Y.
$$
This is the extension product used by Nakaoka \cite[Definition~1.1 and the paragraph following it]{Nakaoka}. In Iitsuka's notation \cite[Definition~A.23]{Iitsuka}, a concentric twin cotorsion pair is \emph{Hovey} if $\mathcal N^{\mathrm i}=\mathcal N^{\mathrm f}$, and it is \emph{heart-equivalent} if
$$
  \add(\Ssub*\mathcal T)=\add(\mathcal U*\Vsub).
$$
One consequence is the following statement.

\begin{corollary}\label{cor:heart-equivalent}
Let $((\Ssub,\mathcal T),(\mathcal U,\Vsub))$ be a concentric twin cotorsion pair in a triangulated category. If it is Hovey or heart-equivalent, then the associated premutation triple $(\Ssub,\Zsub,\Vsub)$ satisfies $(\mathrm{MT4})$. Hence it is a mutation triple.
\end{corollary}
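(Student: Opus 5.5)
By Theorem~\ref{thm:tcp-application}, it suffices to show that the canonical pretriangulated structure on $\Zsub/[\Isub]$ is triangulated. Equivalently, it suffices to verify the two inclusions of Definition~\ref{def:tcp-inclusions}. The plan is to verify these two inclusions (Conditions $(\mathrm{Nak1})$ and $(\mathrm{Nak2})$) under each hypothesis. We then conclude $(\mathrm{MT4})$ via $(1)\Rightarrow(3)$ of Theorem~\ref{thm:tcp-application}, and hence that the triple is a mutation triple by Iitsuka's Definition~4.7.

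\emph{Hovey case.} Assume $\mathcal N^{\mathrm i}=\mathcal N^{\mathrm f}=:\mathcal N$. I would invoke Nakaoka's analysis \cite{Nakaoka}, as recalled in \cite[Theorem~A.26]{Iitsuka}. That result shows that for Hovey twin cotorsion pairs the objects $U^{-}$ for $U\in\Zsub\langle1\rangle$ and $T^{+}$ for $T\in\Zsub\langle-1\rangle$ lie in $\mathcal N^{\mathrm i}$ and $\mathcal N^{\mathrm f}$ respectively, up to $\Isub$-summands. If instead one wants a direct argument, the following route should work. The defining triangle $\sigma U\langle-1\rangle\to U^{-}\to U$ of Definition~\ref{def:corrections} realizes $U^{-}$ as an extension. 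Its first term lies in $\CoCone(\Vsub,\Isub)$-type position, and its third term lies in $\Cone(\Zsub,\Isub)$. Using the cotorsion pairs $(\Ssub,\mathcal T)$ and $(\mathcal U,\Vsub)$ and the octahedral axiom, one shows $U^{-}\in\Cone(\Vsub,\Ssub)=\mathcal N^{\mathrm i}$. The dual argument gives $T^{+}\in\mathcal N^{\mathrm f}$. The role of the Hovey condition is to let one move freely between the cone and cocone descriptions whenever the octahedron produces the other one.

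\emph{Heart-equivalent case.} Assume $\add(\Ssub*\mathcal T)=\add(\mathcal U*\Vsub)$. Here I would again cite \cite[Theorem~A.26(2)]{Iitsuka}, which treats exactly this case under Nakaoka's hypotheses. The key point is that heart-equivalence identifies the two hearts $\ul{\mathcal H}$ used by Nakaoka. This gives natural isomorphisms $\Sigma\Omega\cong\Id$ and $\Omega\Sigma\cong\Id$ via \cite[Remark~5.8]{Nakaoka}. Once these isomorphisms are available, we can bypass the inclusions altogether. The adjunction $(\Sigma,\Omega)$ becomes an equivalence, so Corollary~\ref{cor:functorial-characterization} gives $(\mathrm{MT4})$ directly, taking $(3)$ to $(1)$ there.

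The main obstacle is extracting, from Iitsuka's Appendix~A and Nakaoka's paper, the precise statement that each hypothesis implies either $(\mathrm{Nak1})$ and $(\mathrm{Nak2})$, or the invertibility of $\Sigma$. In particular, the Hovey case may a priori give only the stronger inclusions $\ul{\Zsub\langle1\rangle}^{-}\subseteq\ul{\mathcal N^{\mathrm i}}$ after passing to $\add$. I would handle that by the stable lifting Lemma~\ref{lem:stable-lift}, which lets one absorb $\Isub$-summands. Once either route is secured, the corollary is immediate from Theorem~\ref{thm:tcp-application} or Corollary~\ref{cor:functorial-characterization}.
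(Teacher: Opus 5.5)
Your proposal is correct and is essentially the paper's argument. The paper quotes Iitsuka's Theorem~A.26 (which rests on Nakaoka's results) for triangulatedness of the canonical structure in both the Hovey and heart-equivalent cases, and then applies $(1)\Rightarrow(3)$ of Theorem~\ref{thm:main}; your routes through Theorem~\ref{thm:tcp-application} and Corollary~\ref{cor:functorial-characterization} reduce to exactly this. You only need the triangulatedness conclusion of Theorem~A.26, not the inclusions you attribute to it in the Hovey case, so the optional ``direct'' Hovey sketch can be dropped. As written, that sketch also misplaces $\sigma U\langle-1\rangle$, which lies in $\Zsub\langle-1\rangle\subseteq\CoCone(\Isub,\Zsub)$.
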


\begin{proof}
Iitsuka \cite[Theorem~A.26]{Iitsuka}, using results of Nakaoka \cite[Corollary~5.11, Corollary~6.12, and Proposition~6.2]{Nakaoka}, proves that the canonical pretriangulated category is triangulated for Hovey and heart-equivalent concentric twin cotorsion pairs. We conclude from Theorem~\ref{thm:main} that the associated premutation triple satisfies $(\mathrm{MT4})$.
\end{proof}

\begin{remark}\label{rem:heart-new}
Iitsuka proves the Hovey case of Corollary~\ref{cor:heart-equivalent} directly in \cite[Proposition~4.13]{Iitsuka}. In the heart-equivalent case, Iitsuka \cite[Remark~4.12]{Iitsuka} proves that the different triple $(\Isub,\Zsub,\Isub)$ is a mutation triple. Our characterization additionally yields $(\mathrm{MT4})$ for the associated triple $(\Ssub,\Zsub,\Vsub)$ itself.
\end{remark}

We finally recall Nakaoka's Condition~5.2 because it is used in the next corollary. The adjunctions are taken in the ideal quotient categories. More precisely, $\omega_{\mathcal U}:\C/[\Isub]\to\mathcal U/[\Isub]$ is right adjoint to the inclusion $\mathcal U/[\Isub]\hookrightarrow\C/[\Isub]$, and $\sigma_{\mathcal T}:\C/[\Isub]\to\mathcal T/[\Isub]$ is left adjoint to the inclusion $\mathcal T/[\Isub]\hookrightarrow\C/[\Isub]$. Their restrictions give functors $\omega:\mathcal T/[\Isub]\to\Zsub/[\Isub]$ and $\sigma:\mathcal U/[\Isub]\to\Zsub/[\Isub]$ as recalled in \cite[Lemma~A.8(2)--(3)]{Iitsuka}. Nakaoka constructs in \cite[Proposition~3.13]{Nakaoka} a canonical comparison morphism
$$
  \mu_X:\sigma(\omega_{\mathcal U}(X))
  \longrightarrow
  \omega(\sigma_{\mathcal T}(X))
  \quad\text{in }\Zsub/[\Isub].
$$
His Condition~5.2 \cite[Condition~5.2]{Nakaoka} consists of the following two requirements. For every $X\in\mathcal T*\mathcal U$, the morphism $\mu_X$ is an isomorphism in $\Zsub/[\Isub]$, and
$$
  \mathcal U\cap\mathcal N^{\mathrm f}=\Ssub,
  \quad
  \mathcal T\cap\mathcal N^{\mathrm i}=\Vsub.
$$
Nakaoka calls these requirements Conditions $(\mathrm I)$ and $(\mathrm{II})$, respectively.

\begin{corollary}\label{cor:nakaoka-I-II}
Let $((\Ssub,\mathcal T),(\mathcal U,\Vsub))$ be a concentric twin cotorsion pair in a triangulated category. If it satisfies Nakaoka's Conditions $(\mathrm I)$ and $(\mathrm{II})$ recalled above, then the associated premutation triple $(\Ssub,\Zsub,\Vsub)$ is a mutation triple.
\end{corollary}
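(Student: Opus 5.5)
The plan is to reduce the statement to Theorem~\ref{thm:main} by way of Nakaoka's own triangulation result. We do not verify $(\mathrm{MT4})$ directly from Conditions $(\mathrm I)$ and $(\mathrm{II})$. Instead we show that, under these conditions, the canonical pretriangulated category $(\ul{\Zsub},\Sigma,\Omega,\nabla,\Delta)$ is triangulated, and then apply the implication $(1)\Rightarrow(3)$ of Theorem~\ref{thm:main}.

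First, I would invoke Nakaoka's theorem for concentric twin cotorsion pairs satisfying Condition~5.2 \cite[Condition~5.2]{Nakaoka}. His main result in that section shows that, under $(\mathrm I)$ and $(\mathrm{II})$, the functors $\Sigma$ and $\Omega$ on $\Zsub/[\Isub]$ become mutually quasi-inverse. In the spirit of \cite[Remark~5.8]{Nakaoka}, this means there are natural isomorphisms $\Sigma\Omega\cong\Id$ and $\Omega\Sigma\cong\Id$, and $\Zsub/[\Isub]$ is triangulated. The intended mechanism is as follows. Condition $(\mathrm I)$ makes the comparison $\mu_X$ invertible on $\mathcal T*\mathcal U$, and the objects $Z\langle1\rangle$ and $Z\langle-1\rangle$ lie there. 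Condition $(\mathrm{II})$ identifies the relevant pieces of $\mathcal N^{\mathrm i}$ and $\mathcal N^{\mathrm f}$ with $\Vsub$ and $\Ssub$. Together these yield the unit and counit isomorphisms.

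Next, by \cite[Example~3.15(4) and Theorem~A.22]{Iitsuka}, the mutation functors and the right and left triangles of Nakaoka's construction coincide with the canonical ones attached to the premutation triple $(\Ssub,\Zsub,\Vsub)$. Consequently $\Sigma$ is an autoequivalence of $\ul{\Zsub}$ with quasi-inverse $\Omega$. By Corollary~\ref{cor:functorial-characterization}, implication $(3)\Rightarrow(1)$ (equivalently \cite[Corollary~3.38]{Iitsuka}), the canonical pretriangulated structure is triangulated and $(\mathrm{MT4})$ holds. Theorem~\ref{thm:main} then gives $(\mathrm{MT4})'$, so the triple is a mutation triple by \cite[Definition~4.7]{Iitsuka}.

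The main obstacle is the first step. We must locate precisely which statement of Nakaoka yields invertibility of $\Sigma$ (or of both unit and counit) from Conditions $(\mathrm I)$ and $(\mathrm{II})$, and check that his $\Sigma,\Omega$ are the same functors as Iitsuka's up to natural isomorphism. If Nakaoka's result only gives triangulatedness of his own structure, I would argue via Corollary~\ref{cor:full-faithfulness} instead. Condition $(\mathrm I)$ applied to $X=Z\langle1\rangle$ and $X=Z\langle-1\rangle$ should identify $\Omega\Sigma Z$ and $\Sigma\Omega Z$ with $Z$ in $\ul{\Zsub}$, compatibly with $\alpha_Z$ and $\beta_Z$. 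Then both unit and counit are invertible, and Corollary~\ref{cor:functorial-characterization}, implication $(6)\Rightarrow(1)$, finishes the argument.
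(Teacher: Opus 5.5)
Your argument is correct and follows essentially the paper's route. Both proofs get triangulatedness of the canonical structure from Nakaoka's natural isomorphisms $\Sigma\Omega\cong\Id\cong\Omega\Sigma$, identified with Iitsuka's structure via \cite[Example~3.15(4) and Theorem~A.22]{Iitsuka}, and then apply Theorem~\ref{thm:main}. The only difference is in the first step, which you flag as the obstacle: the paper settles it by citing \cite[Remark~5.14]{Iitsuka}, which shows that $(\mathrm I)$ and $(\mathrm{II})$ imply the two inclusions of Definition~\ref{def:tcp-inclusions}, and then invoking Theorem~\ref{thm:tcp-application}. Your fallback via $\mu_X$ at $X=Z\langle\pm1\rangle$ is therefore unnecessary, and its compatibility with $\alpha_Z$ and $\beta_Z$ is not evident anyway.
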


\begin{proof}
The implication diagram in Iitsuka \cite[Remark~5.14]{Iitsuka} records that
Nakaoka's Conditions $(\mathrm I)$ and $(\mathrm{II})$ imply the condition
denoted there by $(\mathrm{Nak})$. By \cite[Condition~A.27]{Iitsuka}, this is
exactly the pair of quotient inclusions in
Definition~\ref{def:tcp-inclusions}. The assertion now follows from
Theorem~\ref{thm:tcp-application}.
\end{proof}

\subsection{Reducible triples}

Iitsuka \cite[Condition~5.1, Condition~5.6, and Definition~5.8]{Iitsuka} introduced \emph{prereducible triples} by replacing $(\mathrm{MT3})$ with $(\mathrm{RT3})$, and \emph{reducible triples} by additionally imposing $(\mathrm{RT4})$. Iitsuka notes in \cite[Remark~5.2]{Iitsuka} that $(\mathrm{MT3})$ and $(\mathrm{RT3})$ need not coincide. By \cite[Corollary~5.4]{Iitsuka}, one defines the functors $\Sigma,\Omega$ and the classes of right and left triangles by the same formulas as for premutation triples, and they give a canonical pretriangulated category. If $(\mathrm{RT4})$ also holds, Iitsuka \cite[Corollary~5.9]{Iitsuka} proves that this same quintuple is triangulated.

\begin{definition}\label{def:reducible}
A triple satisfying $(\mathrm{MT1})$ and $(\mathrm{MT2})$ satisfies $(\mathrm{RT3})$ if
$$
  \Cone_{\E^{\Isub}}(\Zsub,\Zsub)
  \subseteq
  \CoCone_{\E^{\Isub}_{\Isub}}(\Zsub,\Ssub),
  \quad
  \CoCone_{\E_{\Isub}}(\Zsub,\Zsub)
  \subseteq
  \Cone_{\E^{\Isub}_{\Isub}}(\Vsub,\Zsub),
$$
and if $\Ssub$, $\Zsub$ and $\Vsub$ are extension closed in $(\C,\E^{\Isub}_{\Isub},\s^{\Isub}_{\Isub})$. It satisfies $(\mathrm{RT4})$ if $\Isub$ is strongly contravariantly finite in $\Ssub$, strongly covariantly finite in $\Vsub$, and
$$
  \Ssub\langle-1\rangle=\Vsub\langle1\rangle.
$$
A \emph{prereducible triple} is a triple satisfying $(\mathrm{MT1})$, $(\mathrm{MT2})$ and $(\mathrm{RT3})$. A \emph{reducible triple} is a prereducible triple that also satisfies $(\mathrm{RT4})$.
\end{definition}

\begin{theorem}\label{thm:reducible-application}
Let $(\Ssub,\Zsub,\Vsub)$ be a reducible triple. The following conditions are equivalent.
\begin{enumerate}
\item $(\Ssub,\Zsub,\Vsub)$ is a premutation triple.
\item $(\Ssub,\Zsub,\Vsub)$ is a mutation triple.
\item The subcategories $\Ssub$ and $\Zsub$ are closed under extensions in
$$
  (\C,\E^{\Isub},\s^{\Isub}),
$$
and the subcategories $\Zsub$ and $\Vsub$ are closed under extensions in
$$
  (\C,\E_{\Isub},\s_{\Isub}).
$$
\end{enumerate}
In particular, for a reducible triple, the only possible obstruction to being a mutation triple is the extension closure requirement in $(\mathrm{MT3})$.
\end{theorem}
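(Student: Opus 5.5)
The plan is to prove the cycle $(1)\Rightarrow(3)\Rightarrow(2)\Rightarrow(1)$ by comparing $(\mathrm{MT3})$ with $(\mathrm{RT3})$. $(\mathrm{MT1})$ and $(\mathrm{MT2})$ are common to both notions, and $(\mathrm{MT3})$ consists of two inclusions plus extension closures.

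The implication $(2)\Rightarrow(1)$ is immediate, since a mutation triple is by definition a premutation triple satisfying $(\mathrm{MT4})'$. The implication $(1)\Rightarrow(3)$ is also immediate, because the extension closure requirements in (3) are part of $(\mathrm{MT3})$ in Definition~\ref{def:premutation}.

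For $(3)\Rightarrow(1)$, I only need the two inclusions of $(\mathrm{MT3})$. Since $\E^{\Isub}_{\Isub}\subseteq\E^{\Isub}$, every $\s^{\Isub}_{\Isub}$-triangle is an $\s^{\Isub}$-triangle. Hence
$$
\CoCone_{\E^{\Isub}_{\Isub}}(\Zsub,\Ssub)\subseteq\CoCone_{\E^{\Isub}}(\Zsub,\Ssub)=\Uti .
$$
Combined with the first inclusion of $(\mathrm{RT3})$, this gives $\Cone_{\E^{\Isub}}(\Zsub,\Zsub)\subseteq\Uti$. Dually, $\E^{\Isub}_{\Isub}\subseteq\E_{\Isub}$ and the second inclusion of $(\mathrm{RT3})$ give $\CoCone_{\E_{\Isub}}(\Zsub,\Zsub)\subseteq\Tti$. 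With the closure hypotheses of (3), $(\mathrm{MT3})$ holds, so the triple is a premutation triple.

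It remains to show $(1)\Rightarrow(2)$, which is the main step. Here I use Theorem~\ref{thm:main}. Once the triple is a premutation triple, it suffices to show that Iitsuka's canonical pretriangulated category $(\underlineZ,\Sigma,\Omega,\nabla,\Delta)$ is triangulated.

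By \cite[Corollary~5.9]{Iitsuka}, a reducible triple yields a triangulated quintuple. The point to check, and the expected obstacle, is that this quintuple coincides with the one in Theorem~\ref{thm:iitsuka-pretri}. Both constructions define $\Sigma=\sigma\circ\langle1\rangle$, $\Omega=\omega\circ\langle-1\rangle$ and the standard right and left triangles by the same formulas \cite[Corollary~5.4]{Iitsuka}. These use the reflection triangles \eqref{eq:sigma-reflection}, the coreflection triangles \eqref{eq:omega-coreflection} and the approximation triangles. I will verify that, when both $(\mathrm{MT3})$ and $(\mathrm{RT3})$ hold, the fixed triangles may be chosen in the smaller class of $\s^{\Isub}_{\Isub}$-triangles. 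Then they are admissible for both constructions, and changing these choices alters the functors and triangles only up to canonical isomorphism.

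With this identification, the structures $\nabla,\Delta$ agree, so the structure of Theorem~\ref{thm:iitsuka-pretri} is triangulated. Theorem~\ref{thm:main} $(1)\Rightarrow(2)$ then yields $(\mathrm{MT4})'$, so the triple is a mutation triple. The concluding sentence of the theorem is a restatement of the equivalence $(2)\Leftrightarrow(3)$.
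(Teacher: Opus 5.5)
Your proposal is correct and follows essentially the same route as the paper. You get $(1)\Leftrightarrow(3)$ by deriving the two cone--cocone inclusions of $(\mathrm{MT3})$ from $(\mathrm{RT3})$ via $\E^{\Isub}_{\Isub}\subseteq\E^{\Isub}$ and $\E^{\Isub}_{\Isub}\subseteq\E_{\Isub}$, and $(1)\Rightarrow(2)$ by identifying the quintuple of \cite[Corollary~5.4]{Iitsuka}, triangulated by \cite[Corollary~5.9]{Iitsuka}, with that of Theorem~\ref{thm:iitsuka-pretri}, choosing the defining triangles in $\s^{\Isub}_{\Isub}$, and then applying Theorem~\ref{thm:main}; strictly you prove $(1)\Leftrightarrow(3)$ and $(1)\Leftrightarrow(2)$ rather than the announced cycle, which still yields all the equivalences.
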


\begin{proof}
We first prove $(1)\Leftrightarrow(2)$. Iitsuka \cite[Corollary~5.4]{Iitsuka} defines the canonical pretriangulated quintuple for a prereducible triple by the same $\Sigma,\Omega,\nabla,\Delta$ as in the premutation construction, and Iitsuka \cite[Corollary~5.9]{Iitsuka} proves that this quintuple is triangulated for a reducible triple. Therefore, if a reducible triple is also a premutation triple, we may apply Theorem~\ref{thm:main} to the same canonical structure and obtain $(\mathrm{MT4})$. Hence the triple is a mutation triple. The converse follows from the definition.

Here the two canonical structures do agree. Under $(\mathrm{MT2})$, the
approximation triangles which define $\langle1\rangle$ and
$\langle-1\rangle$ may be chosen in the intersection relative structure.
Under $(\mathrm{RT3})$, the reflection and coreflection triangles needed for
the standard right and left triangles may also be chosen there. The
independence of these choices in \cite[Corollary~5.4]{Iitsuka} identifies the
resulting functors and triangle classes with those of the premutation
construction.

It remains to compare (1) and (3). A reducible triple already satisfies $(\mathrm{MT1})$, $(\mathrm{MT2})$ and $(\mathrm{RT3})$. The first clause of $(\mathrm{RT3})$ gives
$$
 \Cone_{\E^{\Isub}}(\Zsub,\Zsub)
 \subseteq
 \CoCone_{\E^{\Isub}_{\Isub}}(\Zsub,\Ssub)
 \subseteq
 \CoCone_{\E^{\Isub}}(\Zsub,\Ssub).
$$
Thus the right cone--cocone clause of $(\mathrm{MT3})$ is automatic. The second clause of $(\mathrm{RT3})$ gives dually
$$
 \CoCone_{\E_{\Isub}}(\Zsub,\Zsub)
 \subseteq
 \Cone_{\E^{\Isub}_{\Isub}}(\Vsub,\Zsub)
 \subseteq
 \Cone_{\E_{\Isub}}(\Vsub,\Zsub).
$$
Hence the left cone--cocone clause of $(\mathrm{MT3})$ is automatic as well. Consequently, under the defining assumptions of a reducible triple, condition $(\mathrm{MT3})$ is equivalent to its extension closure clause. This clause is exactly (3). Therefore (1) and (3) are equivalent.
\end{proof}

\begin{proposition}\label{prop:reducible-counterexample}
There exists a reducible triple which is not a premutation triple. Consequently, a reducible triple need not be a mutation triple.
\end{proposition}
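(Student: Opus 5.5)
The plan is to produce the counterexample through Theorem~\ref{thm:reducible-application}. That theorem says a reducible triple is a premutation triple exactly when condition (3) there holds. So it suffices to exhibit a reducible triple in which $\Zsub$, say, is closed under extensions in $(\C,\E^{\Isub}_{\Isub},\s^{\Isub}_{\Isub})$, as $(\mathrm{RT3})$ requires, but \emph{not} in $(\C,\E^{\Isub},\s^{\Isub})$. The failure of (3) then gives ``not premutation'', and the second sentence of the proposition follows trivially, since every mutation triple is a premutation triple.

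I would work in $\C=\mod kQ$ for $Q$ the linearly oriented quiver of type $A_4$, viewed as an exact, hence extriangulated, category. Its Auslander--Reiten quiver is small enough that every $\E(C,A)$ and every relative condition can be read off by hand. To keep the verification minimal, I would take $\Ssub=\Vsub=\Isub=\add I_0$ for a single basic module $I_0$, and $\Zsub=\add(I_0\oplus Z_0)$ for a few further indecomposables.

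With this choice several conditions become routine:
\begin{enumerate}
\item $(\mathrm{MT1})$ holds automatically, since $\Ssub\cap\Zsub=\Zsub\cap\Vsub=\Isub$.
\item Extension closure of $\Ssub=\Vsub=\Isub$ in all relative structures is automatic. An $\E^{\Isub}$-triangle $I_1\to B\to I_2$ has $\Isub$-monic first map, so $1_{I_1}$ factors through it and the triangle splits; dually for $\E_{\Isub}$.
\item In $(\mathrm{RT4})$, both $\Ssub\langle-1\rangle$ and $\Vsub\langle1\rangle$ vanish in the quotient, so they agree (after closing up under the conventions of Section~\ref{sec:prelim}). Strong finiteness of $\Isub$ in itself is trivial.
\end{enumerate}

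The genuine constraints are then the following:
\begin{enumerate}
\item $\Isub$ is strongly functorially finite in $\Zsub$, with deflation and inflation approximations.
\item $(\mathrm{MT2})$ holds, i.e. $\E^{\Isub}(\Isub,\Zsub)=0$, $\E_{\Isub}(\Isub,\Zsub\langle-1\rangle)=0$, $\E_{\Isub}(\Zsub,\Isub)=0$ and $\E^{\Isub}(\Zsub\langle1\rangle,\Isub)=0$.
\item The cone and cocone inclusions of $(\mathrm{RT3})$ hold.
\item $\Zsub$ is closed under $\E^{\Isub}_{\Isub}$-extensions.
\item There exists an $\E^{\Isub}$-triangle $A\to B\to C$ with $A,C\in\Zsub$ and $B\notin\Zsub$. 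Its deflation is necessarily not $\Isub$-epic, because otherwise the triangle would lie in $\E^{\Isub}_{\Isub}$ and $B$ would be in $\Zsub$.
\end{enumerate}
Because of the last two constraints, $\Isub$ must not consist of projective-injectives. Otherwise $\E^{\Isub}=\E_{\Isub}=\E$ and the two closure conditions coincide. So I would choose $I_0$ non-projective, or non-injective, so that $\E^{\Isub}\neq\E_{\Isub}$.

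Concretely, I would first search for a non-split short exact sequence $0\to A\to B\to C\to 0$ of small length, with $A,C$ indecomposable and $B$ indecomposable. Then I would pick $I_0$ so that $A\to B$ is $I_0$-monic but $B\to C$ is not $I_0$-epic. A natural candidate is $I_0$ receiving a map from $A$ that extends along $A\hookrightarrow B$ while mapping onto a submodule of $C$ that does not lift to $B$. Next I set $Z_0=A\oplus C$ and check the list above by computing $\Hom$ and $\operatorname{Ext}^1$ from the AR quiver. The cones and cocones $\Zsub\langle\pm1\rangle$ are computed from explicit left and right $\add I_0$-approximations, and one must confirm that these are monic and epic, respectively.

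The main obstacle is arranging $(\mathrm{RT3})$ and $(\mathrm{MT2})$ simultaneously with the failure of $\E^{\Isub}$-closure. The cone/cocone clauses of $(\mathrm{RT3})$ require every $\E^{\Isub}$-cone of a map in $\Zsub$ to lie in $\CoCone_{\E^{\Isub}_{\Isub}}(\Zsub,\Isub)$, which involves finitely many but nontrivial computations. If the first choice fails, I would enlarge $\Zsub$ by the offending cones. Alternatively, I would replace $\Ssub$ or $\Vsub$ by $\add(I_0\oplus\text{extra})$ to absorb them, rechecking $\Ssub\langle-1\rangle=\Vsub\langle1\rangle$ in the quotient. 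I would iterate this bookkeeping until all conditions hold while the distinguished sequence $A\to B\to C$ still witnesses non-closure.
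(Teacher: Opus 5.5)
There is a genuine gap: the witness you plan to construct cannot exist. Your constraints (2) and (4) together already force $\Zsub$ to be closed under $\E^{\Isub}$-extensions.

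To see this, let $\delta\in\E^{\Isub}(C,A)$ with $A\in\Zsub$, and let $g\colon I\to C$ with $I\in\Isub$. For every $a\colon A\to I'$ with $I'\in\Isub$ we have $a(\delta g)=(a\delta)g=0$. Hence $\delta g\in\E^{\Isub}(I,A)$. Since $\Isub\subseteq\Ssub$, this group lies in $\E^{\Isub}(\Ssub,\Zsub)$, which is $0$ by $(\mathrm{MT2})$. So $\delta g=0$, and $\delta\in\E_{\Isub}(C,A)$.

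Thus $\E^{\Isub}(-,\Zsub)=\E^{\Isub}_{\Isub}(-,\Zsub)$ for every triple satisfying $(\mathrm{MT1})$ and $(\mathrm{MT2})$. Dually, $\E_{\Isub}(\Zsub,-)=\E^{\Isub}_{\Isub}(\Zsub,-)$, using $\E_{\Isub}(\Zsub,\Vsub)=0$ and $\Isub\subseteq\Vsub$. Consequently an $\s^{\Isub}$-triangle $A\to B\to C$ with $A,C\in\Zsub$ is automatically an $\s^{\Isub}_{\Isub}$-triangle. The closure clause of $(\mathrm{RT3})$ then gives $B\in\Zsub$.

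Your remark that the deflation of the witness must fail to be $\Isub$-epic is exactly what $(\mathrm{MT2})$ rules out. Enlarging $\Zsub$ or iterating the bookkeeping cannot repair this.

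Combine this with your own (correct) observation that $\Ssub=\Vsub=\Isub$ are closed in every relative structure. It follows that your ansatz $\Ssub=\Vsub=\Isub$ makes condition (3) of Theorem~\ref{thm:reducible-application} hold for every reducible triple. So every reducible triple of that shape is a premutation, hence a mutation, triple, and the ansatz excludes all counterexamples.

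More generally, for a reducible triple condition (3) can fail only through the $\E^{\Isub}$-closure of $\Ssub$ or the $\E_{\Isub}$-closure of $\Vsub$. The extra objects in $\Ssub$ or $\Vsub$ are therefore not optional padding; the witness must live there. A witness for the $\Ssub$ case must be an $\E^{\Isub}$-extension that does not lie in $\E_{\Isub}$, because $\Ssub$ has to remain closed under $\E^{\Isub}_{\Isub}$-extensions.

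This is exactly what the paper does. In $\mod kQ$ for linear $A_4$ it takes $\Isub=\Zsub=\add(M_{11}\oplus M_{23}\oplus M_{34})$, $\Ssub=\add(\Isub\oplus M_{22})$ and $\Vsub=\add(\Isub\oplus M_{44})$. The witness is $0\to M_{22}\to M_{12}\to M_{11}\to 0$:
\begin{itemize}
\item it lies in $\E^{\Isub}$ because $\Hom(M_{22},\Isub)=0$;
\item it is not in $\E_{\Isub}$ because $M_{11}\in\Isub$;
\item its middle term $M_{12}$ is not in $\Ssub$.
\end{itemize}
The real work of the example is then controlling $M_{22}$ and $M_{44}$ in the remaining conditions. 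This means verifying $\E_{\Isub}(\Ssub,\Isub)=0$ and $\E^{\Isub}(\Isub,\Vsub)=0$ for $(\mathrm{MT2})$, and $\Ssub\langle-1\rangle=\add(\Isub\oplus M_{33})=\Vsub\langle1\rangle$ for $(\mathrm{RT4})$.
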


\begin{proof}
Let $Q$ be the linearly oriented quiver
$$
  1\longrightarrow2\longrightarrow3\longrightarrow4
$$
and let $\C=\mod kQ$, with its standard exact structure. For $1\leq a\leq b\leq4$, write $M_{ab}$ for the interval module supported on the vertices $a,a+1,\ldots,b$, with identity maps on the arrows inside its support.

We shall use two elementary calculations for interval modules. They are included to make the verification of the relative conditions transparent. One has
$$
 \dim_k\Hom(M_{ab},M_{cd})=
 \begin{cases}
  1 & \text{if }c\leq a\leq d\leq b,\\
  0 & \text{otherwise},
 \end{cases}
$$
and
$$
 \dim_k\operatorname{Ext}^1(M_{ab},M_{cd})=
 \begin{cases}
  1 & \text{if }a<c\leq b+1\leq d,\\
  0 & \text{otherwise}.
 \end{cases}
$$
The first formula follows directly from the commutativity relations for a morphism of interval representations. For the second, if $b<4$, use the projective resolution
$$
  0\longrightarrow M_{b+1,4}\longrightarrow M_{a4}
  \longrightarrow M_{ab}\longrightarrow0.
$$
If $b=4$, the module $M_{a4}$ is projective.

For later reference, the groups $\operatorname{Ext}^1(S,I)$ with $S$ an indecomposable object of $\Ssub$ and $I$ an indecomposable object of $\Isub$ are displayed in the following table. Each entry is its $k$-dimension.
$$
\begin{array}{c|ccc}
 \dim_k\operatorname{Ext}^1(S,I)
   & I=M_{11} & I=M_{23} & I=M_{34}\\ \hline
 S=M_{11} &0&1&0\\
 S=M_{22} &0&0&1\\
 S=M_{23} &0&0&1\\
 S=M_{34} &0&0&0
\end{array}
$$

Set
$$
 \Isub=\add(M_{11}\oplus M_{23}\oplus M_{34}),
 \quad
 \Zsub=\Isub,
$$
$$
 \Ssub=\add(\Isub\oplus M_{22}),
 \quad
 \Vsub=\add(\Isub\oplus M_{44}).
$$
We verify that $(\Ssub,\Zsub,\Vsub)$ is reducible.

Condition $(\mathrm{MT1})$ is immediate. Indeed,
$$
  \Ssub\cap\Zsub=\Zsub\cap\Vsub=\Isub,
$$
and $\Isub$ is strongly functorially finite in $\Zsub=\Isub$.

We next check $(\mathrm{MT2})$. Since $\Zsub=\Isub$, the identity morphism of an object of $\Isub$ shows that
$$
  \E^{\Isub}(\Ssub,\Zsub)=0,
  \quad
  \E_{\Isub}(\Zsub,\Vsub)=0.
$$
The same observation gives
$$
 \CoCone_{\E_{\Isub}}(\Isub,\Zsub)=\Isub,
 \quad
 \Cone_{\E^{\Isub}}(\Zsub,\Isub)=\Isub.
$$
It remains to verify
$$
  \E_{\Isub}(\Ssub,\Isub)=0,
  \quad
  \E^{\Isub}(\Isub,\Vsub)=0.
$$
By additivity, it is enough to check indecomposable pairs. For the first equality, the only pairs of indecomposable objects $S\in\Ssub$ and $I\in\Isub$ with $\operatorname{Ext}^1(S,I)\neq0$ give, up to isomorphism, the following non-split exact sequences
$$
  0\longrightarrow M_{23}\longrightarrow M_{13}
  \longrightarrow M_{11}\longrightarrow0,
$$
$$
  0\longrightarrow M_{34}\longrightarrow M_{24}
  \longrightarrow M_{22}\longrightarrow0,
$$
and
$$
  0\longrightarrow M_{34}\longrightarrow M_{24}\oplus M_{33}
  \longrightarrow M_{23}\longrightarrow0.
$$
None of the three deflations is $\Isub$-epic. In the first case the identity of $M_{11}$ does not lift because $\Hom(M_{11},M_{13})=0$. In the second case the nonzero map $M_{23}\to M_{22}$ does not lift because $\Hom(M_{23},M_{24})=0$. In the third case the identity of $M_{23}$ does not lift because
$$
  \Hom(M_{23},M_{24})=\Hom(M_{23},M_{33})=0.
$$
Each nonzero group in the table is one-dimensional over $k$. Moreover,
$\E_{\Isub}(S,I)$ is a $k$-linear subspace of
$\operatorname{Ext}^1(S,I)$. In every relevant case, the displayed non-split
exact sequence represents a nonzero generator. The preceding calculation
shows that this generator does not belong to $\E_{\Isub}(S,I)$. Hence
$\E_{\Isub}(\Ssub,\Isub)=0$.

For the second equality, extensions whose second argument belongs to $\Isub$ cannot be nonzero elements of $\E^{\Isub}$. Hence only the extra object $M_{44}\in\Vsub$ needs to be considered. The only relevant non-split exact sequence is
$$
  0\longrightarrow M_{44}\longrightarrow M_{24}
  \longrightarrow M_{23}\longrightarrow0.
$$
The nonzero map $M_{44}\to M_{34}$ does not extend across $M_{24}$ because
$$
  \Hom(M_{24},M_{34})=0.
$$
Therefore the inflation is not $\Isub$-monic, and $\E^{\Isub}(\Isub,\Vsub)=0$. This proves $(\mathrm{MT2})$.

We turn to $(\mathrm{RT3})$. Since $\Zsub=\Isub$, every $\E^{\Isub}$-conflation whose first two terms belong to $\Zsub$ splits. Indeed, its first term belongs to $\Isub$, so the defining $\Isub$-monic condition forces the identity of that term to factor through the inflation. Consequently,
$$
  \Cone_{\E^{\Isub}}(\Zsub,\Zsub)=\Isub.
$$
Dually,
$$
  \CoCone_{\E_{\Isub}}(\Zsub,\Zsub)=\Isub.
$$
The required two inclusions in $(\mathrm{RT3})$ now follow from split conflations.

It remains to check extension closure in the intersection relative structure. Let $S_1,S_2$ be indecomposable objects of $\Ssub$. If $S_1\in\Isub$, then $\E^{\Isub}(S_2,S_1)=0$, and if $S_2\in\Isub$, then $\E_{\Isub}(S_2,S_1)=0$. The only remaining pair is $(S_2,S_1)=(M_{22},M_{22})$, and
$$
 \operatorname{Ext}^1(M_{22},M_{22})=0.
$$
Hence $\E^{\Isub}_{\Isub}(\Ssub,\Ssub)=0$. By biadditivity, this equality holds for arbitrary finite direct sums and direct summands of the displayed indecomposables. Therefore every intersection-relative conflation with end terms in $\Ssub$ splits, so $\Ssub$ is extension closed. The same argument, using $\operatorname{Ext}^1(M_{44},M_{44})=0$, gives $\E^{\Isub}_{\Isub}(\Vsub,\Vsub)=0$. It is immediate that $\E^{\Isub}_{\Isub}(\Zsub,\Zsub)=0$. Thus the extension closure clause of $(\mathrm{RT3})$ holds.

Finally, consider $(\mathrm{RT4})$. The canonical epimorphism
$$
  M_{23}\longrightarrow M_{22}
$$
is a right $\Isub$-approximation. Its kernel is $M_{33}$. Indeed,
$$
  \Hom(M_{11},M_{22})=\Hom(M_{34},M_{22})=0
$$
and $\dim_k\Hom(M_{23},M_{22})=1$. Thus $\Isub$ is strongly contravariantly finite in $\Ssub$. Dually, the canonical monomorphism
$$
  M_{44}\longrightarrow M_{34}
$$
is a left $\Isub$-approximation with cokernel $M_{33}$, and $\Isub$ is strongly covariantly finite in $\Vsub$.

These approximation properties also determine the two relative shift subcategories. Let $f:I_0\to M_{22}$ be an $\Isub$-epic deflation with $I_0\in\Isub$. The relevant morphism spaces are
$$
 \Hom(M_{11},M_{22})=\Hom(M_{34},M_{22})=0,
 \qquad
 \dim_k\Hom(M_{23},M_{22})=1.
$$
Thus only a summand isomorphic to $M_{23}$ can map nontrivially to $M_{22}$. Since $f$ is $\Isub$-epic, at least one such component is nonzero. After an automorphism of the $M_{23}$-isotypic summand of $I_0$, the morphism $f$ is the canonical epimorphism $M_{23}\to M_{22}$ on one summand and zero on a complementary object of $\Isub$. Therefore
$$
 \ker f\cong M_{33}\oplus I'
 \quad\text{for some }I'\in\Isub.
$$
We briefly justify the passage to arbitrary objects of $\Ssub$. If the target of an $\Isub$-epic deflation has a summand in $\Isub$, the identity of that summand lifts and the corresponding part splits off. It remains to consider a target $M_{22}^{\oplus n}$. After discarding source summands which map trivially, the deflation is represented by a matrix
$$
 M_{23}^{\oplus m}\longrightarrow M_{22}^{\oplus n}.
$$
The $\Isub$-epic property says that this matrix has rank $n$. Row and column operations reduce it to the direct sum of $n$ copies of the canonical epimorphism $M_{23}\to M_{22}$ and a zero map on a complementary object of $\Isub$. Its kernel therefore belongs to $\add(\Isub\oplus M_{33})$. Conversely, $M_{33}$ is obtained from the displayed approximation and each object of $\Isub$ is obtained from a split conflation. Hence
$$
  \Ssub\langle-1\rangle=\add(\Isub\oplus M_{33}).
$$
The dual matrix argument applies to arbitrary objects of $\Vsub$. Here
$$
 \dim_k\Hom(M_{44},M_{34})=1.
$$
It follows that
$$
  \Vsub\langle1\rangle=\add(\Isub\oplus M_{33}).
$$
Thus $(\mathrm{RT4})$ holds, and $(\Ssub,\Zsub,\Vsub)$ is a reducible triple.

It remains to show that it is not premutation. Consider the non-split exact sequence
$$
  0\longrightarrow M_{22}\longrightarrow M_{12}
  \longrightarrow M_{11}\longrightarrow0.
$$
Since
$$
  \Hom(M_{22},\Isub)=0,
$$
its extension class belongs to $\E^{\Isub}(M_{11},M_{22})$. Both end terms lie in $\Ssub$, whereas $M_{12}\notin\Ssub$. Therefore $\Ssub$ is not extension closed in $(\C,\E^{\Isub},\s^{\Isub})$. Condition $(\mathrm{MT3})$ fails, so the reducible triple is not a premutation triple and hence is not a mutation triple.
\end{proof}

\begin{remark}\label{rem:reducible-question}
Iitsuka \cite[the paragraph preceding Corollary~5.9]{Iitsuka} asks whether every reducible triple is a mutation triple. In Proposition~\ref{prop:reducible-counterexample}, we give a negative answer. In our example, the cone and cocone conditions in $(\mathrm{MT3})$ hold. The intersection relative extension closure required by $(\mathrm{RT3})$ also holds. However, extension closure in $\E^{\Isub}$ fails. Thus the extension closure condition in Theorem~\ref{thm:reducible-application} is a genuine obstruction.

In Proposition~\ref{prop:reducible-counterexample}, we have $\Zsub=\Isub$, so the quotient $\ul{\Zsub}$ is zero. Iitsuka's question \cite[the paragraph preceding Corollary~5.9]{Iitsuka} does not exclude this case. The next example shows that the same failure can occur when the quotient is nonzero.

\end{remark}

\begin{example}\label{cor:nonzero-reducible-counterexample}
There exists a reducible triple which is not a premutation triple and whose associated quotient is nonzero.
\end{example}

\begin{proof}
Let $\Gamma$ be the disjoint union of the linearly oriented quiver
$1\to2\to3\to4$ and the one-loop quiver with vertex $5$ and loop
$\varepsilon$. Put $A=k\Gamma/(\varepsilon^2)$. We work in $\C=\mod A$ with
its standard exact structure. On the first component, we retain the interval modules $M_{ab}$ from
Proposition~\ref{prop:reducible-counterexample}. On the second component, let
$L$ be the simple module and let $P$ be the indecomposable projective module.
The module $P$ has length two and occurs in the non-split
exact sequence
$$
  0\longrightarrow L\longrightarrow P\longrightarrow L\longrightarrow0.
$$
The algebra supported at vertex $5$ is $R=k[\varepsilon]/(\varepsilon^2)$. If
$t:R\to k$ sends $a+b\varepsilon$ to $b$, then
$$
 R\longrightarrow \Hom_k(R,k),
 \quad x\longmapsto \bigl(y\longmapsto t(yx)\bigr),
$$
is an isomorphism of $R$-bimodules. Thus $R$ is symmetric, and $P$ is
injective as well as projective.

Every module supported at vertex $5$ belongs to $\add(P\oplus L)$.

We now set
$$
 \Isub=\add(M_{11}\oplus M_{23}\oplus M_{34}\oplus P),
$$
$$
 \Zsub=\add(\Isub\oplus L),\quad
 \Ssub=\add(\Isub\oplus M_{22}),\quad
 \Vsub=\add(\Isub\oplus M_{44}).
$$
We verify that this is the required example. There are no morphisms or
extensions between modules supported on different connected components of
$\Gamma$. Consequently, all the following calculations separate according to
the support of the modules.

On the first component, we use the calculations in
Proposition~\ref{prop:reducible-counterexample} without change. On the
second component, the deflation $P\to L$ is a right $\Isub$-approximation, and
the inflation $L\to P$ is a left $\Isub$-approximation. The kernel of the first
map and the cokernel of the second are both $L$. Hence $\Isub$ is strongly
functorially finite in $\Zsub$, and
$$
  \Zsub\langle-1\rangle=\Zsub=\Zsub\langle1\rangle.
$$
Since $P$ is projective-injective, the four relative extension groups in
$(\mathrm{MT2})$ vanish on the second component. Combining this with the
verification in Proposition~\ref{prop:reducible-counterexample} proves
$(\mathrm{MT1})$ and $(\mathrm{MT2})$.

We next check $(\mathrm{RT3})$. The modules supported at vertex $5$ form the
extension closed subcategory $\add(P\oplus L)$. Moreover, projectivity and
injectivity of $P$ imply that every exact sequence on this component belongs to
both relative structures. Split exact sequences with an end term $P$ therefore
give
$$
 \Cone_{\E^{\Isub}}(\add(P\oplus L),\add(P\oplus L))
 =\add(P\oplus L)
 =\CoCone_{\E^{\Isub}_{\Isub}}(\add(P\oplus L),\add P),
$$
and, dually,
$$
 \CoCone_{\E_{\Isub}}(\add(P\oplus L),\add(P\oplus L))
 =\add(P\oplus L)
 =\Cone_{\E^{\Isub}_{\Isub}}(\add P,\add(P\oplus L)).
$$
Together with the first-component calculation in
Proposition~\ref{prop:reducible-counterexample}, these equalities prove the two
cone--cocone inclusions and the extension closure clause of $(\mathrm{RT3})$.

We still use the approximation maps $M_{23}\to M_{22}$ and $M_{44}\to M_{34}$
from Proposition~\ref{prop:reducible-counterexample}; they remain $\Isub$-approximations,
and their kernel and cokernel are both $M_{33}$. All the other indecomposable
objects of $\Ssub$ and $\Vsub$ belong to $\Isub$. Hence
$$
  \Ssub\langle-1\rangle
  =\add(\Isub\oplus M_{33})
  =\Vsub\langle1\rangle.
$$
This proves $(\mathrm{RT4})$, so the triple is reducible.

The non-split exact sequence
$$
  0\longrightarrow M_{22}\longrightarrow M_{12}
  \longrightarrow M_{11}\longrightarrow0
$$
still belongs to $\E^{\Isub}$. Indeed,
$\Hom(M_{22},M_{11}\oplus M_{23}\oplus M_{34})=0$ by
Proposition~\ref{prop:reducible-counterexample}, and $\Hom(M_{22},P)=0$ because
the two modules have disjoint supports. Since $M_{12}\notin\Ssub$, the
subcategory $\Ssub$ is not extension closed in $\E^{\Isub}$. Thus the triple is
not a premutation triple.

Finally, $L$ is nonzero in $\Zsub/[\Isub]$. Maps between different components
vanish, and every composite
$$
  L\longrightarrow P\longrightarrow L
$$
is zero: the first map has image in the socle of $P$, whereas the second map
factors through the top of $P$. Thus $1_L$ does not factor through any object
of $\Isub$, and $\Zsub/[\Isub]\neq0$.
\end{proof}

\section{Relation with earlier quotient constructions}\label{sec:relations}
We emphasize that our characterization concerns the canonical pretriangulated structure associated with a premutation triple. The underlying additive category may admit another triangulated structure. Such a structure need not be related to the mutation functors $\Sigma$ and $\Omega$. Thus we do not claim that the existence of an arbitrary triangulated structure on the quotient implies $(\mathrm{MT4})$.

Our result is different from earlier criteria for classical quotient constructions. Zhou and Zhu \cite{ZhouZhu} prove that an extension closed subcategory $\mathcal A$ has a triangulated quotient $\mathcal A/\mathcal X$ if $(\mathcal A,\mathcal A)$ is an $\mathcal X$ mutation pair. Under Auslander--Reiten assumptions, they also prove a converse. Zheng, Cao and Wei \cite{ZhengCaoWei} construct a pretriangulated subfactor under one sided closure assumptions. When $\E(\mathcal X,\mathcal X)=0$, they show that triangulatedness implies the mutation pair and extension closure conditions. Our setting is different. We start with three subcategories $(\Ssub,\Zsub,\Vsub)$. We characterize when Iitsuka's canonical pretriangulated structure is triangulated. The criterion is given by the compatibility of the functors  $(\,\cdot\,)^{-}$ and $(\,\cdot\,)^{+}$.

Iitsuka \cite{Iitsuka} verifies $(\mathrm{MT4})$ in the Frobenius, rigid mutation and orthogonal mutation examples. For concentric twin cotorsion pairs, Theorem~\ref{thm:tcp-application} shows that the two quotient inclusions in Definition~\ref{def:tcp-inclusions} imply $(\mathrm{MT4})$. These inclusions are equivalent to $(\mathrm{MT4}^{-})$. For reducible triples, Theorem~\ref{thm:reducible-application} shows that extension closure is the only remaining obstruction. Proposition~\ref{prop:reducible-counterexample} and Example~\ref{cor:nonzero-reducible-counterexample} exhibit this obstruction for both zero and nonzero quotients.

Mochizuki and Ogawa \cite{MochizukiOgawa} study enhanced extriangulated subquotients. Huerta \cite{Huerta} studies Iyama--Yoshino reduction in extriangulated categories. These approaches do not give the converse criterion for premutation triples proved here.

We construct the mutation functors from four operations,
$$
  \langle1\rangle,\hspace{2mm} \sigma,
  \hspace{2mm}
  \langle-1\rangle,\hspace{2mm} \omega.
$$
In $(\mathrm{MT4})$, we require the two subcategories $\ul{\Zsub\langle1\rangle}^{-}$ and $\ul{\Zsub\langle-1\rangle}^{+}$ to be equal. In Theorem~\ref{thm:main}, we prove that this equality is equivalent to invertibility of the unit and the counit. Thus $(\mathrm{MT4})$ is exactly the obstruction to triangulatedness of the canonical mutation structure.

\vspace{6mm}

\hspace{-4mm}\textbf{Data Availability}\hspace{2mm} Data sharing not applicable to this article as no datasets were generated or analysed during
the current study.
\vspace{2mm}

\hspace{-4mm}\textbf{Conflict of Interests}\hspace{2mm} The authors declare that they have no conflicts of interest to this work.

\end{document}